\title[Algebraic Nielsen--Thurston Classification]{An effective algebraic detection of the Nielsen--Thurston classification of mapping classes}
\newtheorem{thm}{Theorem}
\newtheorem{lemma}[thm]{Lemma}
\newtheorem{cor}[thm]{Corollary}
\newtheorem{prop}[thm]{Proposition}
\numberwithin{thm}{section}
\newcommand{\al}{\alpha}
\newcommand{\bZ}{\mathbb{Z}}
\newcommand{\bC}{\mathbb{C}}
\DeclareMathOperator{\Aut}{Aut}
\DeclareMathOperator{\Out}{Out}
\DeclareMathOperator{\Mod}{Mod}
\newcommand{\bH}{\mathbb{H}}
\newcommand{\yt}{\widetilde}
\newcommand{\mW}{\mathcal{W}}
\newcommand{\grp}[1]{\langle{#1}\rangle}
\newcommand{\del}{\partial}
\author[T. Koberda]{Thomas Koberda}
\address{Department of Mathematics\\ Yale University\\ P.O. Box 208283\\ New Haven, CT 06520-8283}
\email{ thomas.koberda@gmail.com}
\author[J. Mangahas]{Johanna Mangahas}
\address{Department of Mathematics\\ Brown University\\ 151 Thayer St.\\ Providence, RI 02912 }
\email{ mangahas@math.brown.edu}
\keywords{}
\begin{document}
\bibliographystyle{amsalpha}

\maketitle
\begin{center}
\today
\end{center}
\begin{abstract}
In this article, we propose two algorithms for determining the Nielsen-Thurston classification of a mapping class $\psi$ on a surface $S$.  We start with a finite generating set $X$ for the mapping class group and a word $\psi$ in $\langle X \rangle$.  We show that if $\psi$ represents a reducible mapping class in $\Mod(S)$ then $\psi$ admits a canonical reduction system whose total length is exponential in the word length of $\psi$.  We use this fact to find the canonical reduction system of $\psi$.  We also prove an effective conjugacy separability result for $\pi_1(S)$ which allows us to lift the action of $\psi$ to a finite cover $\yt{S}$ of $S$ whose degree depends computably on the word length of $\psi$, and to use the homology action of $\psi$ on $H_1(\yt{S},\mathbb{C})$ to determine the Nielsen-Thurston classification of $\psi$.
\end{abstract}


\thispagestyle{empty}
\section{Introduction}
In this paper we develop two algorithms for determining the Nielsen--Thurston classification of an element of the mapping class group $\Mod(S)$.  Both pivot on our ability to compute an upper bound on the length of the \emph{canonical reduction system} $\sigma(f)$ of a given mapping class $f$, which we state as our first theorem.  We define $\sigma(f)$ in Section \ref{s:back}, noting here that it consists of nonperipheral curves fixed by powers of $f$, and is empty exactly when $f$ has finite order or is pseudo-Anosov.  We clarify what constitutes a \emph{reasonable definition of curve length} just after Proposition \ref{p:dehnhumphlick} below. 

\begin{thm}\label{t:estimate} 
Let $X \subset \Mod(S)$ be a finite generating set.  There exists computable $C = C(X,S,\ell)$ such that, for any $f \in \mathrm{Mod}(S)$ and $\alpha \in \sigma(f)$, we have $\ell(\alpha) \leq C^{\ell_X(f)}$, where $\ell_X$ is word length with respect to $X$ and $\ell(\alpha)$ is any reasonable definition of curve length.
\end{thm}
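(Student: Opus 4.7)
The plan is to combine a generator-wise length distortion bound with a seed-curve argument that realizes each $\alpha \in \sigma(f)$ as the image of a universally short curve under a word in $X$ whose length is controlled linearly by $\ell_X(f)$.

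For the first ingredient I would invoke Proposition~\ref{p:dehnhumphlick}: for each $g \in X$ there is a computable $C_g \ge 1$ with $\ell(g \cdot \gamma) \le C_g\,\ell(\gamma)$ for every simple closed curve $\gamma$. Setting $C_0 := \max_{g \in X} C_g$ and writing $f$ as a word of length $\ell_X(f)$ in $X$ yields
\[
\ell(f \cdot \gamma) \le C_0^{\ell_X(f)}\,\ell(\gamma)
\]
for every simple closed curve $\gamma$, giving the desired exponential dependence. It then suffices to produce, for each $\alpha \in \sigma(f)$, a simple closed curve $\gamma_\alpha$ with $\ell(\gamma_\alpha) \le L_0 = L_0(S, \ell)$ and a word $w_\alpha$ in $X \cup X^{-1}$ of length at most $K \ell_X(f)$, with $K = K(S)$ computable, such that $w_\alpha \cdot \gamma_\alpha = \alpha$; then $\ell(\alpha) \le C_0^{K \ell_X(f)} L_0$ absorbs into the stated $C^{\ell_X(f)}$.

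The key structural input for the seed step is that $f$ permutes the finite simplex $\sigma(f) \subset \mathcal{C}(S)$, so there is a computable $N_0 = N_0(S)$ with $f^{N_0}$ fixing every $\alpha \in \sigma(f)$ pointwise; equivalently, the Dehn twist $T_\alpha$ commutes with $f^{N_0}$. I see two plausible routes forward. The first pairs this commutation relation with a known lower bound on $\ell_X(T_\alpha)$ in terms of $\ell(\alpha)$ (for instance via logarithmic distortion of Dehn twist subgroups in $\Mod(S)$), converting the bound $\ell_X(f^{N_0}) \le N_0 \ell_X(f)$ into an exponential bound on $\ell(\alpha)$. The second is a pigeonhole on the partial products $f_i = w_i \cdots w_1$ of a fixed factorization $f = w_{\ell_X(f)} \cdots w_1$ applied to a fixed short filling multicurve $P$: a repeat in the annular subsurface projection of $f_i P$ to the neighborhood of $\alpha$ produces a subword of linearly bounded length stabilizing $\alpha$, from which $\gamma_\alpha$ and $w_\alpha$ can be extracted.

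The hard part is making the seed step fully effective with all constants computable. The commutator-distortion approach demands a quantitative Dehn twist distortion estimate explicit in terms of $X$; the pigeonhole approach demands that the extracted subword have length linear rather than superlinear in $\ell_X(f)$, and that the intermediate short filling system can be chosen with computable constants depending only on $S$ and the chosen notion of length. In either route the qualitative existence of a periodic point of $f$ on the curve complex is standard; upgrading it to the effective computable exponential bound of the theorem is the principal obstacle I would expect.
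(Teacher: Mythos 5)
The core issue is that your proposal never supplies the \emph{lower-bound} direction of the argument, which is what the paper's proof actually lives on. You correctly identify the easy upper bound --- applying a word of length $\ell_X(f)$ built from twists multiplies intersection numbers by at most $(M+1)^{\ell_X(f)}$ (this is the paper's Lemma~\ref{l:upperboundint}, a direct consequence of the classical inequality $i(T_\gamma(\alpha),\beta)\le i(\alpha,\gamma)i(\beta,\gamma)+i(\alpha,\beta)$; note your appeal to Proposition~\ref{p:dehnhumphlick} for this step is circular, since that proposition is an equivalent reformulation of the theorem you are proving, not a prior distortion lemma). But your ``seed step'' architecture then asks for a short curve $\gamma_\alpha$ and a word $w_\alpha$ of length $O(\ell_X(f))$ with $w_\alpha\gamma_\alpha=\alpha$, and neither of your two suggested routes delivers this.

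In Route~1, the fact that $T_\alpha$ commutes with $f^{N_0}$ is a relation, not a length bound: the centralizer of a reducible $f^{N_0}$ contains all twists along $\sigma(f)$ as well as the mapping class groups of the fixed subsurfaces, and there is no mechanism in the proposal that forces any element of this centralizer --- in particular $T_\alpha$ --- to have short $\ell_X$-length just because $f^{N_0}$ does. The quantitative statement you would need (that a twist commuting with $f^{N_0}$ has $\ell_X$-length bounded by $\ell_X(f^{N_0})$) is not a known distortion result and, taken at face value, is false. In Route~2, a ``repeat'' of the annular projection of $f_iP$ and $f_jP$ does not yield a subword stabilizing $\alpha$; at best it gives a subword with small annular translation, and to extract a bound from that you would be rebuilding the subsurface-projection machinery of Tao's solution to the conjugacy problem, which the paper is explicitly trying to avoid. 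The paper's actual argument is organized differently and by contradiction: starting from $\alpha\in\sigma(f)$ with $i(\alpha,\gamma)$ large, it produces a power $p$ bounded in terms of $\chi(S)$ (via a pigeonhole on parallel arcs in the pseudo-Anosov case and an explicit universal-cover lift-and-twist computation in the annular case) such that $i(f^p\gamma,\gamma)$ is at least quadratic in $i(\alpha,\gamma)$; combined with the upper bound from Lemma~\ref{l:upperboundint} applied to $f^p$, this forces $i(\alpha,\gamma)$ to be exponentially bounded. That forcing step is the substantive content of the theorem, and it is absent from your proposal --- as you yourself note in your final paragraph, you have identified where the real work lies but not done it.
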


The impetus for Theorem \ref{t:estimate} came from a challenge to make effective the result in \cite{KobGeom} that, given $f \in \Mod(S)$, one may find a finite cover $\yt{S}$ such that the action on $H_1(\yt{S},\bC)$ induced by lifts of $f$ to $\yt{S}$ determines the Nielsen--Thurston classification of $f$.  Call such $\yt{S}$ a \emph{classifying cover} for $f$.  Our first algorithm can be stated as a theorem in the following way:

\begin{thm}[Cover Algorithm]\label{t:covers} 
Suppose $f \in \Mod(S)$ is given, and $\yt{S}$ is its smallest classifying cover.  The degree of $\yt{S}$ over $S$ is bounded above by a computable function of the word length of $f$.
\end{thm}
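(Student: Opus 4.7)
The plan is to combine Theorem \ref{t:estimate} with an effective conjugacy separability result for $\pi_1(S)$ (the one announced in the abstract) to produce a classifying cover whose degree is controlled by a computable function of $\ell_X(f)$.

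I would begin by recording what a classifying cover needs to accomplish. By the main result of \cite{KobGeom}, it is enough to exhibit a finite regular cover $\yt{S}\to S$ such that (i) some uniformly bounded power $f^k$ lifts to $\yt{S}$, and (ii) if $\sigma(f)\neq\emptyset$, then every $\alpha\in\sigma(f)$ lifts to a simple closed curve on $\yt{S}$ which is nonperipheral and homologically nontrivial. Granted these two conditions, the homology action on $H_1(\yt{S},\bC)$ distinguishes periodic, reducible, and pseudo-Anosov $f$. Hence bounding the degree of the smallest classifying cover amounts to bounding the index of a normal subgroup $N\trianglelefteq\pi_1(S)$ that simultaneously separates a controlled family of conjugacy classes from the trivial one and is respected by some power of $f$.

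By Theorem \ref{t:estimate}, every $\alpha\in\sigma(f)$ has $\ell(\alpha)\leq C^{\ell_X(f)}$. Fixing a hyperbolic structure on $S$, there are only finitely many free-homotopy classes of simple closed curves below any given length, and they can be listed effectively; equivalently, each candidate $\alpha$ is represented by a word $w_\alpha\in\pi_1(S)$ whose length is bounded by an explicit function of $C^{\ell_X(f)}$. So it suffices to separate this finite, effectively enumerable list of short conjugacy classes in some finite quotient that is also $f^k$-invariant for a uniformly bounded $k$.

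For each $w_\alpha$, I would apply effective conjugacy separability: it yields a normal subgroup $N_\alpha\trianglelefteq\pi_1(S)$ of index bounded by a computable function of $|w_\alpha|$ with $w_\alpha\notin N_\alpha$. Replacing $N_\alpha$ by its intersection with the kernel of the $\Aut(\pi_1(S)/N_\alpha)$-action (a subgroup whose index is computable from $|\pi_1(S)/N_\alpha|$) ensures $N_\alpha$ is characteristic, hence preserved by $f$, and that a power $f^k$ with $k$ bounded in terms of $|\pi_1(S)/N_\alpha|$ descends to the identity on $\pi_1(S)/N_\alpha$. Intersecting the (finitely many) $N_\alpha$ produces a single $N\trianglelefteq\pi_1(S)$ of index bounded computably in $\ell_X(f)$, and the corresponding cover $\yt{S}$ satisfies (i) and (ii).

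The main obstacle is the quantitative step: making conjugacy separability of $\pi_1(S)$ effective, so that a word of length $L$ representing a nontrivial conjugacy class can be separated from the identity in a quotient of size bounded computably in $L$. This is precisely the second technical ingredient promised in the abstract, and I would prove it by combining an effective residual finiteness bound for surface groups with a quantitative translation from residual finiteness to conjugacy separability (using that finitely generated conjugacy classes in $\pi_1(S)$ correspond to closed geodesics, whose separation can be controlled by the length of a word representative). Once this effective separability is in hand, the combinatorial assembly of the $N_\alpha$ above is routine, and yields the desired computable upper bound on the degree of the smallest classifying cover.
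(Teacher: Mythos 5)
Your high-level skeleton---use Theorem \ref{t:estimate} to bound the lengths of candidate reducing curves, then apply an effective separability result to $\pi_1(S)$ and pass to a characteristic subgroup so that $f$ (or a bounded power) lifts---matches the paper's strategy. However, the mechanism by which your cover is supposed to \emph{be} classifying has two genuine gaps. First, your criterion (ii), that every $\alpha\in\sigma(f)$ should lift to a homologically nontrivial simple closed curve, is asserted by appeal to \cite{KobGeom} but never verified for the cover you actually build. Worse, the cover you build does not satisfy it: you choose $N_\alpha$ so that $w_\alpha\notin N_\alpha$, i.e., by residual finiteness, which means $\alpha$ does \emph{not} lift to a closed curve in $\yt{S}$ at all---the opposite of what (ii) requires. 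Getting $\alpha$ to lift homologically nontrivially is a stronger, separate property that residual finiteness does not deliver (think of separating curves, whose lifts may well remain nullhomologous). Second, when $\sigma(f)=\emptyset$, your conditions (i)--(ii) become vacuous and you do not explain how the homology action on $H_1(\yt{S},\bC)$ distinguishes finite-order from pseudo-Anosov.

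The paper sidesteps both problems by a cleaner route. Rather than trying to make reducing curves homologically visible, it uses effective conjugacy separability for \emph{pairs}: build a characteristic quotient $Q$ in which any two non-conjugate elements of $\pi_1(S)$ of controlled length stay non-conjugate (Lemma \ref{l:conjsep}). Then Lemma \ref{l:characters} shows that if $[g]$ and $f^k([g])$ are non-conjugate in $Q$, they have distinct characters on $H_1(\yt{S},\bC)$, so a lift of $f$ acts nontrivially on homology; conversely, if the characters agree for all short $g$, conjugacy separability forces $f^k$ to fix a short curve, so $f$ is reducible. The same mechanism applied to a short filling set of $\pi_1(S)$ (Lemma \ref{l:shortfill}) settles the finite-order versus pseudo-Anosov dichotomy. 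So the paper's test is ``do $[g]$ and $f^k([g])$ give the same character on $H_1(\yt{S},\bC)$?'' rather than ``is the lift of $\alpha$ nullhomologous?'', and this is what actually connects the separability input to the homology action. Your proposal would need either to switch to this pairwise-separability-plus-characters argument, or to replace residual finiteness by an effective version of the (distinct and harder) statement that simple closed curves can be made homologically nontrivial in controlled covers, and then supply the missing argument for the $\sigma(f)=\emptyset$ case.
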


Our second algorithm uses Theorem \ref{t:estimate} to directly determine $\sigma(f)$:

\begin{thm}[List Algorithm]\label{t:list} 
There exists an algorithm, exponential-time when $S$ is punctured, which takes as input a mapping class $f\in\Mod(S)$, presented as a word in $X$, and outputs $\sigma(f)$.  When $\sigma(f)$ is empty the algorithm determines whether $f$ is finite-order or pseudo-Anosov.
\end{thm}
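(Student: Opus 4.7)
The plan is to use Theorem \ref{t:estimate} to reduce the computation of $\sigma(f)$ to a finite combinatorial search over short multicurves, followed by a certification step that picks out the canonical one.  First, compute the explicit bound $B = C^{\ell_X(f)}$ provided by Theorem \ref{t:estimate}: every isotopy class in $\sigma(f)$ has length at most $B$, so $\sigma(f)$ is a multicurve whose total length is at most $(3g-3+n)B$.  When $S$ is punctured, essential simple closed curves admit combinatorial coordinates (for instance Dehn--Thurston coordinates, or normal-arc coordinates on a fixed ideal triangulation) in which all isotopy classes of length at most $B$ can be enumerated in time polynomial in $B$, and hence exponential in $\ell_X(f)$.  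Call the resulting finite list $\mathcal{E}$.

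To single out $\sigma(f)$ from among multicurves supported in $\mathcal{E}$, I would use the Birman--Lubotzky--McCarthy characterization of the canonical reduction system as the smallest multicurve $M$ for which $f(M) = M$ and the first return of an appropriate power of $f$ to each component of $S \setminus M$ is finite-order or pseudo-Anosov.  For each candidate $M \subseteq \mathcal{E}$, $f$-invariance is verified by applying $f$ (as a word in $X$) to each component and comparing isotopy classes, which is effective via the solvable word problem in $\Mod(S)$ (e.g.\ by the Alexander method on a fixed marking).  The condition on complementary restrictions is then checked by recursing: the list algorithm is invoked on each component of $S \setminus M$, which is strictly smaller in complexity, and the base cases (annuli, thrice-punctured spheres, and low-complexity tori) are handled by hand.

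Finally, if no non-empty $M \subseteq \mathcal{E}$ certifies as $\sigma(f)$, then $\sigma(f) = \emptyset$ and $f$ is either finite-order or pseudo-Anosov.  There is a uniform a priori bound $N = N(S)$ on the order of a finite-order element of $\Mod(S)$ (for instance Wiman's $4g+2$ for a closed genus-$g$ surface, with analogous bounds in the punctured case), so the algorithm tests whether $f^k$ is trivial for some $k \leq N$ using the solvable word problem: if so $f$ is finite-order, otherwise $f$ is pseudo-Anosov.  The main obstacle throughout is the certification step: producing an effective certificate that a candidate $M$ really is $\sigma(f)$, rather than merely some other $f$-invariant multicurve in $\mathcal{E}$.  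This is what forces the recursion on complementary subsurfaces, and the recursion must be organized so that the exponential time bound coming from the outer enumeration is preserved.
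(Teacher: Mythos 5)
Your overall plan---use Theorem~\ref{t:estimate} to bound the length of curves in $\sigma(f)$, enumerate short candidates in coordinates, then certify the canonical system---starts the same way the paper does, but your certification step diverges and has a genuine gap, one that you yourself flag at the end without resolving.

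The issue is the recursion on complementary subsurfaces. To invoke the list algorithm on a component $S'$ of $S \setminus M$, you must present the restriction of the relevant power of $f$ to $S'$ as a word of bounded length in a fixed generating set for $\Mod(S')$. Given $f$ as a word in $X \subset \Mod(S)$, this is not automatic: knowing that $f^N$ preserves $S'$ setwise does not hand you such a word, and bounding its length in terms of $\ell_X(f)$ is a separate problem that you do not address. Without this, the recursion cannot even be set up, let alone run in exponential time. There is also a smaller issue in the Birman--Lubotzky--McCarthy-style characterization as stated: $f$-invariance plus finite-order/pA complementary restrictions does not by itself single out $\sigma(f)$, since any $f$-invariant multicurve containing $\sigma(f)$ whose extra curves lie in identity components also satisfies this; you need minimality, which you mention in passing but which adds another layer to certify.

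The paper avoids both problems by working curve-by-curve rather than multicurve-by-multicurve, and by certifying directly with Ivanov's criterion: $\alpha \in \sigma(f)$ iff $f^N(\alpha) = \alpha$ and every $\beta$ with $i(\alpha,\beta)>0$ has $f^N(\beta)\neq\beta$. The key additional input is Lemma~\ref{disqualifiers}, which shows that if a short fixed curve $\delta$ fails to be in $\sigma(f)$, then there is a \emph{short} disqualifying curve $\beta$ (of length at most $2F+4$) witnessing this. That lemma turns Ivanov's criterion into a finite check against a precomputed list of short fixed curves, with no recursion, no subsurface restrictions, and no word problem in subsurface mapping class groups. If you want to repair your approach, you would need either a proof of a ``short disqualifier'' lemma of this type, or an effective procedure for expressing subsurface restrictions of $f^N$ as bounded-length words; the former is what the paper supplies and is considerably cleaner.
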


Existence of $C$ in Theorem \ref{t:estimate} follows from Theorem E in Tao's solution to the conjugacy problem for $\mathrm{Mod}(S)$, which relies on heavy machinery related to the curve complex \cite{Tao}.  Now that key curve complex constants have been determined (the smallest in \cite{HPW, Webb}, but see also \cite{Aougab, Bowditch, CRS}), it may be of interest to try to compute $C$ by this approach.  In contrast, we use relatively elementary methods to show:

\begin{prop}\label{p:dehnhumphlick} 
Suppose $S$ has $g \geq 0$ genus and $n \geq 0$ punctures.  There exists a set $X$ generating $\Mod(S)$ and a filling set of curves $\{\gamma_j\}$ so that, if curve length is defined by \[\ell(\alpha) = \sum_j i(\alpha,\gamma_j),\] where $i(\cdot,\cdot)$ denotes geometric intersection number, then for any $\alpha \in \sigma(f)$, we have
\begin{equation*}\label{e:DHLub}
\ell(\alpha) \leq A \cdot B^{217\chi(S)^2\ell_X(f)}
\end{equation*}
with $A = 2g+1$ and $B = 2$ if $n \leq 1$, otherwise with $A = 2g + 2n - 1$ and $B=3$.
\end{prop}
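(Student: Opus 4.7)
My plan is to combine a concrete choice of Dehn twist generating set with the classical multiplicative estimate for how Dehn twists distort geometric intersection numbers. I would fix $X$ to be a Dehn--Humphries--Lickorish generating set of $\Mod(S)$: Dehn twists about an explicit filling collection $\{\gamma_1,\ldots,\gamma_k\}$, with the $2g+1$ Humphries curves when $n\le 1$ and their $2g+2n-1$ extension when $n\ge 2$. These same $\gamma_j$ would serve as the filling set in the statement. In the standard configuration, the generating curves pairwise intersect at most once (respectively, twice) when $n\le 1$ (respectively, $n\ge 2$), and this will ultimately pin down the bases $B=2$ and $B=3$ and the initial constants $A=2g+1$ and $A=2g+2n-1$.

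The key intersection-number ingredient is the classical bound
\[
i(T_\gamma(\alpha),\delta) \;\le\; i(\alpha,\delta) + i(\alpha,\gamma)\,i(\gamma,\delta).
\]
Summing over $\delta\in\{\gamma_j\}$ and using the intersection pattern of the generators produces $\ell(T_{\gamma_i}^{\pm 1}(\alpha)) \le B\,\ell(\alpha)$ for every generator, so iterating along a word $w\in \langle X\rangle$ of length $L$ yields $\ell(w(\alpha)) \le B^L\,\ell(\alpha)$ for any simple closed curve $\alpha$.

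The crux of the proof is then to exhibit, for every $\alpha\in\sigma(f)$, a word $w_\alpha \in \langle X\rangle$ and a filling curve $\gamma_{j_0}$ with $\alpha = w_\alpha(\gamma_{j_0})$ and $\ell_X(w_\alpha) \le 217\chi(S)^2\,\ell_X(f)$. Combined with the multiplicative estimate this gives exactly $\ell(\alpha)\le A \cdot B^{217\chi(S)^2\ell_X(f)}$, with $A$ absorbing the intersection counts of one $\gamma_{j_0}$ against the other generating curves. To produce $w_\alpha$ I would exploit the $\Mod(S)$-equivariance $\sigma(gfg^{-1}) = g(\sigma(f))$: after a conjugation realizing a change of marking from the standard configuration to one adapted to $\alpha$, one can reduce to the case in which $\alpha$ is one of the $\gamma_j$. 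The quadratic dependence on $\chi(S)$ and the explicit constant $217$ should emerge from bookkeeping the number of elementary changes of marking (together with generators employed at each step), since the number of topologically distinct curve configurations one must traverse grows like the number of pieces of a pants-like decomposition, i.e.\ linearly in $-\chi(S)$, and one pays $-\chi(S)$ again to record the necessary positional information.

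The main obstacle I expect is this last step: establishing the linear-in-$\ell_X(f)$ bound on $\ell_X(w_\alpha)$ without recourse to Tao's curve-complex machinery. A promising elementary route is to induct on $-\chi(S)$: having realized one $\alpha_0\in\sigma(f)$ as above, cut $S$ along $\alpha_0$ to obtain a proper subsurface $S'$, note that a bounded power of $f$ restricts to a mapping class on $S'$ whose canonical reduction system is $\sigma(f)\setminus\{\alpha_0\}$, and apply the proposition inductively on $S'$. Controlling all the explicit constants, and in particular the coefficient $217$, uniformly through this induction while absorbing the word-length cost of passing to bounded powers and of the cutting-and-recapping procedures is what I expect will take the bulk of the argument.
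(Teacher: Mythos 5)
Your setup is right---the same generating set, the same filling curves, and the same iterated use of the Dehn twist inequality to get $\ell(w(\alpha))\le B^L\ell(\alpha)$ (this is essentially Lemma \ref{l:upperboundint} in the paper). But the ``crux'' step, where you would produce a word $w_\alpha$ of length $\lesssim \chi(S)^2\ell_X(f)$ with $\alpha = w_\alpha(\gamma_{j_0})$, is a genuine gap: you have only sketched a route (conjugation to a standard marking, induction on $-\chi(S)$), and that route does not obviously yield a bound linear in $\ell_X(f)$. There is no a priori control on the word length of the change-of-marking class $g$ moving $\alpha$ into the standard configuration, and the equivariance $\sigma(gfg^{-1})=g(\sigma(f))$ does not supply any. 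In fact, the assertion ``every $\alpha\in\sigma(f)$ is the image of a generator under a word of length $O(\ell_X(f))$'' is essentially equivalent to the proposition you are trying to prove, so proving it directly risks circularity.

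The paper avoids this difficulty entirely. Rather than trying to realize $\alpha$ as a short image of a generator, it \emph{compares an upper bound with a lower bound on $i(f^p(\gamma),\gamma)$ for a bounded power $p$}. The upper bound is the iterated Dehn twist estimate. The lower bound comes from the structure of $\sigma(f)$: if $\alpha\in\sigma(f)$ has large intersection with some base curve $\gamma$, then either a bounded power of $f$ acts pseudo-Anosov on a component of $S\setminus\sigma(f)$ adjacent to $\alpha$ (and a pigeonhole argument on parallel arcs shows $i(f^p(\gamma),\gamma)$ is at least quadratic in $i(\alpha,\gamma)$), or a bounded power of $f$ twists about $\alpha$ (and a universal-cover argument again shows $i(f^p(\gamma),\gamma)\gtrsim i(\alpha,\gamma)^2$). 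Squaring against the exponential upper bound forces $i(\alpha,\gamma)$ to be at most exponential in $\ell_X(f)$, and the constants $A$, $B$, and $217\chi(S)^2$ come from bookkeeping the power $p$ (bounded via $|\chi(S)|$ and the maximal order of a finite mapping class) and the pigeonhole counts. You should redirect your effort toward this comparison argument rather than trying to construct $w_\alpha$ directly.
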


By \emph{reasonable definition of curve length} we mean a length estimate which is quasi-isometric to $\ell(\alpha)$ as defined above, which we note includes length according to any hyperbolic metric on $S$ or, fixing any base point $p$ and choice of word length on $\pi_1(S,p)$, minimal length of elements in the conjugacy class in $\pi_1(S,p)$ represented by $\alpha$ (for non--peripheral curves only).  Theorem \ref{t:estimate} follows easily from Proposition \ref{p:dehnhumphlick}, which is proved in Section \ref{s:computation} (see Theorem \ref{t:computation}).

When $S$ has boundary, the List Algorithm indicated by Theorem \ref{t:list} can be implemented in exponential time, as described in Section \ref{s:complexity}, using compressed representations of curves on $S$ such as intersection with a fixed triangulation, or Dehn--Thurston coordinates.  The main points of interest in the results here are: the conceptual simplicity of the List Algorithm, the connection to homology classification afforded by the Cover Algorithm, and the elementary nature of our proof of Theorem \ref{t:estimate}, which relies mainly on the pigeon-hole principal and easy observations about intersection number, given Ivanov's description of canonical reduction systems of mapping classes.

Both our algorithms are independent of the famous algorithm of Bestvina and Handel, which uses train tracks and Perron--Frobenius theory to prove and implement Nielsen--Thurston classification for $\Mod(S)$ \cite{BH2}.  This algorithm outputs a \emph{train track} for the input element, which is more information than $\sigma(f)$.  While this algorithm runs quickly in practice, its precise theoretical complexity is unclear.  Recently, Mark Bell has developed an exponential-time classification algorithm based on triangulations of punctured surfaces \cite{Bell}, and has implemented it in his program \textsc{Flipper} \cite{Bell2}.  Hamidi-Tehrani and Chen obtained an exponential-time classification of $\Mod(S)$ using its piecewise-linear action on measured train tracks \cite{HTC}.  Polynomial-time classification is available for braid groups \cite{Calvez}, which are closely related to mapping class groups of punctured disks and admit the same Nielsen--Thurston trichotomy.  Another closely related group, $\Out(F_n)$, admits a similar, albeit more complicated classification (including an analogous train track construction in \cite{BH1}); an algorithmic approach modeled on Theorems \ref{t:estimate} and \ref{t:list} appears in \cite{CMP}.


\subsection{Acknowledgements.}  The authors would like to thank J. Malestein for asking whether there exists an effective method of determining the Nielsen--Thurston classification of a mapping class via examination of the homology of finite covers.  They would also like to thank C. Atkinson, D. Canary, M. Clay, W. Dicks, D. Futer, D. Margalit, H. Namazi, A. Pettet, I. Rivin, S. Schleimer, J. Souto, and D. Thurston for productive conversations.  The first author was supported by an NSF Graduate Student Research Fellowship for part of the time this research was carried out, and by NSF Grant DMS-1203964.  The second author acknowledges support from NSF DMS-1204592 and AMS-Simons.  The authors are grateful to the Banff International Research Station, CRM Barcelona, and the Polish Academy of Sciences for hospitality while this work was completed.  An abstract of this project will appear in the series, Research Perspectives CRM Barcelona.  Finally, the authors thank an anonymous referee for numerous helpful comments.

\section{Background}\label{s:back}
We consider orientable surfaces $S$ with $g$ genus, $n$ punctures, and negative, finite Euler characteristic $\chi(S) = 2 - 2g - n$.  By a \emph{curve} on $S$, we mean an isotopy class of homotopically nontrivial, non-boundary-parallel simple loops.  A \emph{filling set} of curves $\{\gamma_i\}$ is such that any curve $\alpha$ has nonzero geometric intersection $i(\alpha,\gamma_j)$ for some $j$.  As in Proposition \ref{p:dehnhumphlick}, we measure the length of a curve $\alpha$ is by its intersection with a fixed filling set of curves: $ \ell(\alpha) = \sum_j \ell_i(\alpha,\gamma_j)$.  Note that, if $\ell_h$ is length with respect to some fixed hyperbolic metric $h$, then
\[(C_h/2)\cdot\ell(\alpha) \leq \ell_h(\alpha) \leq D_h\cdot\ell(\alpha),\] where $D_h$ is the maximum diameter of the components of $S-\bigcup_i \gamma_i$, and $C_h$ is the minimal thickness of collar neighborhoods around the $\{\gamma_i\}$ chosen thinly enough so that no more than two collar neighborhoods of distinct elements of $\{\gamma_i\}$ cover any one point on $S$. 

\subsection{The Nielsen--Thurston classification of mapping classes}\label{s:NTback}
Let $f\in\Mod(S)$.  Since $f$ is a homeomorphism of $S$ up to isotopy, we have that $f$ acts on curves on $S$.  We have the following trichotomy:
\begin{enumerate}
\item
The mapping class $f$ has  \emph{finite order} as an element of $\Mod(S)$.  Thus, some power of $f$ is isotopic to the identity.
\item
The mapping class $f$ has infinite order in $\Mod(S)$ and there is a curve on $S$ with a finite orbit under the action of $f$.  Such a mapping class is called  \emph{reducible}.
\item
For any curve $\alpha$, the $f$-orbit of $\alpha$ is infinite.  Such a mapping class is called  \emph{pseudo-Anosov}.
\end{enumerate}

The trichotomy above is called the  \emph{Nielsen--Thurston classification}.  The following are standard results about mapping classes, and proofs can be found in \cite{BLM}, \cite{FM} and \cite{FLP}, for instance.

\begin{thm}[Nielsen]
Let $f$ be a finite order mapping class.  Then there exists a hyperbolic metric on $S$ and a representative of $f$ which is an isometry of the corresponding Riemann surface.  In particular, the representative has finite order as a homeomorphism of $S$.
\end{thm}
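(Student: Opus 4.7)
The plan is to exhibit a fixed point for the cyclic group $\langle f \rangle$ on Teichm\"uller space $\mathcal{T}(S)$. The mapping class group $\Mod(S)$ acts properly discontinuously and by isometries on $\mathcal{T}(S)$ with respect to the Teichm\"uller metric, so finite order of $f$ forces $\langle f \rangle$ to act as a finite group of isometries of $\mathcal{T}(S)$.

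To produce the fixed point, I would invoke a Cartan-style fixed-point theorem. The cleanest modern route passes to the Weil--Petersson completion $\overline{\mathcal{T}(S)}$, a complete CAT(0) metric space on which $\Mod(S)$ still acts by isometries; any finite group of isometries of a complete CAT(0) space fixes the circumcenter of any orbit. Arguing that this circumcenter in fact lies in the open stratum $\mathcal{T}(S)$, rather than on a noded boundary stratum (e.g.\ because $f$ preserves the set of pinched curves, which would have to be empty since $f$ has no invariant multicurve data to commit to in this construction of the circumcenter from a single interior orbit), produces a $\langle f \rangle$-fixed point inside $\mathcal{T}(S)$. Alternatively, one can follow Nielsen's original argument: lift $f$ to a homeomorphism of $\bH^2$, extend equivariantly to the boundary circle, and apply a Brouwer-type fixed-point argument together with invariance of simple-closed-geodesic length functions to produce an invariant hyperbolic structure. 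For maximal generality one may instead invoke Kerckhoff's earthquake-convexity proof of Nielsen realization, which handles all finite subgroups of $\Mod(S)$ at once.

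Having a fixed point of $\langle f \rangle$ in $\mathcal{T}(S)$ means, by definition, a marked hyperbolic metric $h$ on $S$ together with an isometric representative $\varphi \in \Isom(S,h)$ of $f$. Because $\chi(S)<0$ and $S$ is of finite type, $\Isom(S,h)$ is finite (for instance, since $\Isom(S,h)$ acts properly discontinuously on $\mathcal{T}(S)$ with cocompact cell, or by a direct area argument bounding the order of any periodic isometry), so $\varphi$ has finite order as a homeomorphism of $S$. The main obstacle is the fixed-point theorem on $\mathcal{T}(S)$: the Weil--Petersson CAT(0) argument is the shortest, but Nielsen's original boundary-circle argument is the most elementary and suffices for the cyclic case stated here.
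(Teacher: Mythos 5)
The paper does not prove this theorem; it is quoted as a standard fact with proofs deferred to \cite{BLM}, \cite{FM}, and \cite{FLP}, so there is no in-paper argument to compare against. Your sketch does identify the correct framework (a fixed point of $\langle f\rangle$ acting on Teichm\"uller space) and the correct final step (a fixed marked hyperbolic structure yields an isometric, hence finite-order, representative because $\Isom(S,h)$ is finite when $\chi(S)<0$).

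However, your preferred route through the Weil--Petersson completion has a real gap at exactly the step you flagged. You need to know that the circumcenter of the finite orbit lies in the open stratum $\mathcal{T}(S)$ rather than on an augmented boundary stratum. Your parenthetical justification --- that if the circumcenter were on a boundary stratum then $f$ would preserve the pinched multicurve, and ``$f$ has no invariant multicurve data to commit to'' --- is not an argument: a finite-order mapping class can certainly preserve a multicurve setwise (the hyperelliptic involution does, for instance), so no contradiction is obtained from $f$ fixing a boundary stratum. The correct justification that the circumcenter of a finite set of interior points is interior is an actual piece of Weil--Petersson geometry, using geodesic convexity of $\mathcal{T}(S)$ in the WP metric and the behavior of WP geodesics near the augmented boundary (non-refraction), not a formal observation about $f$ and multicurves. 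A minor secondary point: your stated reason for finiteness of $\Isom(S,h)$ (``acts properly discontinuously on $\mathcal{T}(S)$ with cocompact cell'') is garbled; the clean version is that $\Isom(S,h)$ injects into the stabilizer in $\Mod(S)$ of the point $[h]\in\mathcal{T}(S)$, and point-stabilizers are finite since the $\Mod(S)$-action is properly discontinuous, or alternatively one can cite the Hurwitz area bound. Your listed alternatives --- Nielsen's boundary-circle argument for cyclic groups, or Kerckhoff's earthquake-convexity proof --- are correct and would close the gap; but as written, the WP circumcenter route is incomplete.
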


\begin{thm}
Let $f$ be a reducible mapping class.  Then there exists a multicurve (i.e. a finite set of distinct, disjoint curves) whose isotopy class is setwise stabilized by $f$.
\end{thm}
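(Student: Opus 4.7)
The plan is to build the multicurve from a regular neighborhood of the finite orbit guaranteed by reducibility. Let $\alpha$ be a curve with $f^k(\alpha) = \alpha$ for some $k\geq 1$, and set $O = \{\alpha, f(\alpha), \ldots, f^{k-1}(\alpha)\}$. I realize the members of $O$ simultaneously in minimal position in $S$; let $U$ be their union and $N = N(U)$ a closed regular neighborhood. Since $f$ permutes the isotopy classes in $O$, a representative homeomorphism $F$ can be chosen with $F(U) = U$, so $F$ preserves $N$ and the isotopy class of $\partial N$.

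The candidate multicurve $M$ consists of the isotopy classes of those components of $\partial N$ that are essential and non-peripheral in $S$. Because $F$ permutes the components of $\partial N$ while preserving each of the three topological types (trivial, peripheral, and essential non-peripheral), the mapping class $f$ preserves $M$. The components of $M$ are pairwise disjoint by construction; passing to distinct isotopy classes yields a multicurve in the paper's sense, and the only remaining question is whether $M$ is non-empty.

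To see $M\neq\emptyset$, suppose for contradiction that every component of $\partial N$ is trivial or peripheral. Then every component of $S\setminus N$ is a disk or an annulus parallel to a boundary component or puncture, so every complementary region of $U$ in $S$ is a disk containing at most one puncture; in other words, $U$ fills $S$. The main obstacle is then the classical input that the $\Mod(S)$-stabilizer of such a filling $1$-complex is finite: the homomorphism from the stabilizer to the finite group $\Aut(U)$ of combinatorial graph automorphisms has trivial kernel by the Alexander trick, applied to each complementary disk with any marked puncture fixed. This would force $f$ to have finite order, contradicting reducibility and completing the proof. An alternative, more elementary route that bypasses the filling-stabilizer fact would be to perform surgery directly on the curves of $O$ at their intersections, but keeping track of the $f$-equivariance of the surgeries turns out to be more delicate than the neighborhood argument just outlined.
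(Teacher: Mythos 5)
The paper does not supply its own proof of this theorem; it simply cites \cite{BLM}, \cite{FM}, and \cite{FLP} as standard references. Your argument is correct and is essentially the standard proof found in those sources: realize the finite orbit in minimal position, take the essential non-peripheral components of the boundary of a regular neighborhood, and rule out emptiness by observing that otherwise the orbit fills $S$, forcing $f$ to lie in the finite stabilizer of a filling $1$-complex (Alexander method), contradicting the infinite-order requirement in the definition of reducible. One step you assert without justification --- that if every component of $\partial N$ is inessential or peripheral then $U$ fills --- deserves a sentence (e.g.\ cap off each such boundary component with the disk or once-punctured disk it bounds on the far side; no such disk can contain a curve of $U$, so $R$ together with the caps is a closed subsurface of $S$ disjoint from $U$, which is impossible unless each cap lies on the $R$ side, making $R$ itself a disk or once-punctured disk), but this is a routine point and the overall proof is sound.
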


Such a multicurve is called a \emph{reduction system}.  We will be interested in a mapping class's \emph{canonical reduction system} $\sigma(f)$, which may be defined as the set of curves common to all maximal reduction systems \cite{BLM}.   It is detailed in \cite{Iv2} that, for some $k$ depending only on the surface, the power $f^k$ sends each component of $S\setminus\sigma(f)$ to itself, inducing on each either a trivial or pseudo-Anosov mapping class.  Furthermore, any curve in $\sigma(f)$ that bounds only components on which $f^k$ acts trivially, must be the core curve of an annulus where $f^k$ acts locally as a Dehn twist.

\subsection{Artin--Dehn--Humphries--Lickorish generators}\label{s:DHL}
For a surface $S$ with genus $g \geq 0$ and punctures $n \geq  0$, we can generate $\Mod(S)$ with $2g + n$ Dehn twists ($2g+1$ when $n=0$) and, when $n>1$, an additional $n-1$ \emph{half-twists}, along the curves shown in Figure \ref{fig1}.  Precise definitions and proof can be found in \cite{FM}. We remark that a smaller, similar generating set appeared first in \cite{LP} (Corollary 2.11 there), and in \cite{BD} one can find an alternate, algebraic proof that these generate, along with a detailed history of the project of generating the mapping class group.

Here we note our labeling convention.  For $1 \leq i \leq \max\{2g + n,2g+1\}$, we let $\gamma_i$ be the base curves of the Dehn twists, denoted $T_{\gamma_i}$.  When $n>1$, we have $2g + n < i \leq 2g + 2n - 1$ for which $\gamma_i$ denotes curves corresponding to half-twists in the following way: one component of $S$ cut along $\gamma_i$ is a disk with two punctures, and the half-twist $H_{\gamma_i}$ permutes these two punctures and is supported on this disk.  In this latter case, we call $\gamma_i$ the base curve for the half-twist $H_{\gamma_i}$.
\begin{figure}[H]
\begin{center}
\includegraphics[height=2in]{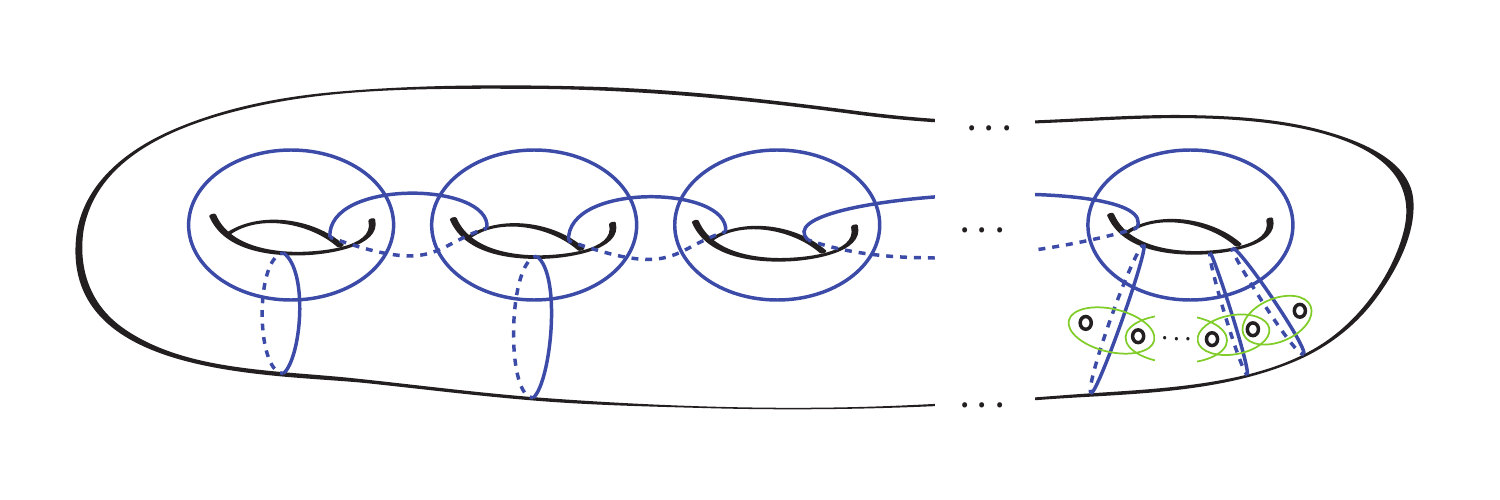}
\caption{The mapping class group is generated by Dehn twists and half-twists about the curves shown.  The curves bounding punctures are the base curves for half-twists.  Graphic based on Figures 4.10 and 9.5 in \cite{FM}.}
\label{fig1}
\end{center}
\end{figure}

\section{The main technical tool}\label{s:computation}

Here we show how to compute $C$ in Theorem \ref{t:estimate}.  Consider the special case that the generating set $X$ consists of Dehn twists or half-twists about separating curves, an example of which is described in Section \ref{s:DHL}.  Let $\Gamma$ be the set of base curves for the Dehn twists and half-twists in $X$.  Define $M = M(X)$ by \[M(X) = \max\{i(\gamma_i,\gamma_j): \gamma_i, \gamma_j \in \Gamma\}.\]  The generators described in Section \ref{s:DHL} give optimal $M=1$ when $S$ has no more than one puncture, and $M=2$ otherwise.  Let $\ell(f) = \ell_X(f)$ denote word length in the mapping class group with respect to $X$.  In this section we prove

\begin{thm}\label{t:computation} For any component $\alpha$ of $\sigma(f)$ and any $\gamma \in \{\gamma_i\}$, \[i(\alpha,\gamma)\leq (M+1)^{217\chi(S)^2\ell(f)}\] \end{thm}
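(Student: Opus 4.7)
I would bound $i(\alpha,\gamma)$ for $\alpha\in\sigma(f)$ and $\gamma\in\Gamma$ by combining a per--generator intersection inequality, Ivanov's structural description of $\sigma(f)$, and a pigeonhole argument along the orbit of $\alpha$ under a suitable power of $f$.

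\emph{Step 1: per--generator growth.} The classical Dehn twist intersection formula gives, for $\gamma\in\Gamma$, any simple closed curve $\beta$, and any $\gamma'\in\Gamma$,
$$i(T_\gamma^{\pm 1}(\beta),\gamma')\;\leq\; i(\beta,\gamma)\cdot i(\gamma,\gamma')+i(\beta,\gamma')\;\leq\; M\cdot i(\beta,\gamma)+i(\beta,\gamma'),$$
with the analogous bound for half--twists $H_\gamma^{\pm 1}$. Taking the maximum over $\gamma'\in\Gamma$, every single generator $x\in X$ contributes at most a factor of $M+1$, so iterating along a word $w$ produces
$$\max_{\gamma'\in\Gamma} i(w(\beta),\gamma')\;\leq\;(M+1)^{|w|}\max_{\gamma'\in\Gamma} i(\beta,\gamma').$$

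\emph{Step 2: Ivanov's structural bound.} Ivanov's theorem on canonical reduction systems supplies a computable integer $K=K(\chi(S))$ such that $f^K$ fixes each component of $\sigma(f)$ individually, acts on a collar of each as a (possibly trivial) power of a Dehn twist, and acts on each complementary subsurface as the identity or a pseudo--Anosov. Standard bookkeeping on $|\sigma(f)|\leq -\chi(S)$, the order of the induced permutation on $\sigma(f)$, and the number of complementary subsurfaces allows one to take $K\leq 217\chi(S)^2$, the constant $217$ absorbing the small numerical factors arising in these individual bounds.

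\emph{Step 3: pigeonhole.} The one--way growth bound from Step 1 must be converted into a bound on the length of the \emph{fixed} curve $\alpha$. Writing $f^K = x_m\cdots x_1$ with $m\leq 217\chi(S)^2\ell(f)$ in the generators $X$, set $\alpha_j = (x_j\cdots x_1)(\alpha)$, so that $\alpha_0=\alpha_m=\alpha$ by Step 2 and consecutive members have $\Gamma$--intersection profiles differing by at most a factor of $M+1$. If $i(\alpha,\gamma)>(M+1)^{217\chi(S)^2\ell(f)}$ for some $\gamma\in\Gamma$, I would argue that the number of distinct isotopy classes of simple closed curves with a prescribed bounded $\Gamma$--profile is small enough to force the orbit $\{\alpha_j\}$ to revisit a curve at some $0\leq j<j'\leq m$; the sub--word $x_{j'}\cdots x_{j+1}$ then fixes the isotopy class of $\alpha$, and applying the bound inductively to this strictly shorter mapping class contradicts the assumed size of $i(\alpha,\gamma)$.

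\textbf{Main obstacle.} The hard step is Step 3: the growth estimate runs in the wrong direction for bounding the length of a fixed curve, and the counting of simple closed curves with bounded $\Gamma$--profile must be sharp enough to produce the claimed exponent $217\chi(S)^2$. A second subtlety is arranging the induction so that the shorter sub--word obtained from pigeonhole still falls under the hypothesis---that is, showing $\alpha$ remains a canonical reduction curve (or at least a periodic curve whose canonical reduction behavior can be tracked) for the sub--word. I expect both difficulties to be resolved by leaning on Ivanov's explicit description of $\sigma$ for a power of a mapping class together with the observation that a curve periodic under some $g\in\Mod(S)$ of infinite order must appear in $\sigma(g)$.
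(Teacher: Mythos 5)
Your Step 1 matches the paper's Lemma~\ref{l:upperboundint} precisely, and Step 2 correctly identifies that a power of $f$ bounded by a computable function of $\chi(S)$ fixes components of $\sigma(f)$ and acts in the structured way Ivanov describes. Step 3, however, is not the route the paper takes, and as you yourself flag it is the step that does not close.

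The paper resolves the ``growth runs the wrong way'' obstacle by a quite different mechanism: rather than pigeonholing along the orbit $\{\alpha_j\}$ of the fixed curve, it pigeonholes the \emph{arcs of $\gamma$} crossing $\alpha$. In the pseudo-Anosov case, among the $\geq i(\alpha,\gamma)/2$ arcs of $\gamma$ entering the component $S'$, at least $i(\alpha,\gamma)/(6|\chi(S')|)$ run parallel (since there are only $3|\chi(S')|$ isotopy classes of disjoint arcs in $S'$); choosing a power $p\leq 3|\chi(S)|$ so that $f^p$ moves one such arc off itself forces $n^2$ crossings between $f^p(\gamma)$ and $\gamma$ where $n$ is the count of parallel arcs, yielding $i(f^p\gamma,\gamma) \geq (i(\alpha,\gamma)/6|\chi(S')|)^2$. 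In the twist case, a hyperbolic-plane lifting argument (Figure~\ref{fig2}) shows $i(f^{3p}\gamma,\gamma)\geq i(\alpha,\gamma)^2/2$ with $p$ bounded by orbifold-order constants. Either way, the quadratic lower bound on $i(f^p\gamma,\gamma)$ is then set against the exponential upper bound of Lemma~\ref{l:upperboundint}, and the two together directly bound $i(\alpha,\gamma)$. No revisiting argument is needed, and the structural dichotomy (pA component vs.\ twist core) is doing the real work that in your sketch is deferred to an unspecified counting.

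Your proposed pigeonhole on $\{\alpha_j\}$ has a genuine circularity: to conclude that the orbit must revisit a curve, you would need the $\alpha_j$ to lie in a finite set of bounded size, which requires a bound on their $\Gamma$-profiles---but a bound on those profiles (at $j=0$) is exactly what the theorem is trying to prove. Moreover, even granting a revisit $\alpha_j = \alpha_{j'}$, the shorter subword $g = x_{j'}\cdots x_{j+1}$ fixing $\alpha$ need not have $\alpha \in \sigma(g)$ (a fixed curve of a reducible map can intersect other fixed curves and fail the canonicality condition), so the inductive hypothesis may not apply; your ``observation that a curve periodic under some $g$ of infinite order must appear in $\sigma(g)$'' is false in general. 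These are not cosmetic gaps; they are why the paper goes through the subsurface structure of $\sigma(f)$ rather than an orbit pigeonhole.
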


\subsection{Proof of Proposition \ref{p:dehnhumphlick} using Theorem \ref{t:computation}}  This follows immediately by using the generators described in Section \ref{s:DHL} for $X$.

\subsection{Proof of Theorem \ref{t:estimate} using Proposition \ref{p:dehnhumphlick}}  Different generating sets for $\Mod(S)$ change word lengths by a quasi-isometry, and reasonable definitions of curve length also differ from $\ell$ by a quasi-isometry.  The proposition and the quasi-isometry constants allow one to compute $C$.

\subsection{Proof of Theorem \ref{t:computation}}

\begin{lemma}\label{l:upperboundint}For any $\gamma_i$ and $\gamma_j$, we have $i(f(\gamma_i),\gamma_j) \leq M(M+1)^{\ell(f)}.$\end{lemma}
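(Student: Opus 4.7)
The plan is to prove the bound by induction on the word length $\ell(f)$. The base case $\ell(f)=0$ is immediate: $f$ is the identity, so $i(f(\gamma_i),\gamma_j) = i(\gamma_i,\gamma_j) \leq M = M(M+1)^0$ directly from the definition of $M$. For the inductive step, I would write $f = x \cdot g$ with $x \in X$ a single generator and $\ell(g) = \ell(f) - 1$, so that $f(\gamma_i) = x(g(\gamma_i))$.

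The key tool is the standard estimate for how intersection numbers grow under a single Dehn twist: for any isotopy classes of simple closed curves $\alpha,\beta$ and any $\gamma \in \Gamma$,
\[i\bigl(T_\gamma^{\pm 1}(\alpha),\beta\bigr) \leq i(\alpha,\gamma)\cdot i(\gamma,\beta) + i(\alpha,\beta),\]
and the analogous inequality holds for half-twists $H_\gamma^{\pm 1}$. Since every generator $x \in X$ is a Dehn twist or half-twist about some base curve $\gamma \in \Gamma$, I would apply this with $\alpha = g(\gamma_i)$ and $\beta = \gamma_j$. The inductive hypothesis supplies both $i(g(\gamma_i),\gamma) \leq M(M+1)^{\ell(f)-1}$ and $i(g(\gamma_i),\gamma_j) \leq M(M+1)^{\ell(f)-1}$, while $i(\gamma,\gamma_j) \leq M$ by definition of $M$, so
\[i(f(\gamma_i),\gamma_j) \leq M(M+1)^{\ell(f)-1}\cdot M + M(M+1)^{\ell(f)-1} = M(M+1)^{\ell(f)},\]
completing the induction.

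The only subtle point is checking the intersection inequality for half-twists with the same coefficient of $1$ in front of $i(\alpha,\gamma)i(\gamma,\beta)$. I would handle this by a local model argument in the twice-punctured disk $D$ bounded by the base curve $\gamma$: outside $D$ the half-twist is the identity, while each arc of $\alpha \cap D$ is rotated by $\pi$, and the number of additional crossings with $\beta$ created inside $D$ is controlled by $i(\alpha,\gamma)\cdot i(\gamma,\beta)$ via the same double-counting that underlies the Dehn twist case. Beyond this minor verification, the lemma reduces to the one-line induction above, so I do not expect any serious obstacle.
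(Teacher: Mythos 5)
Your proof is correct and matches the paper's approach exactly: induction on word length using the standard Dehn-twist intersection inequality $i(T_\gamma(\alpha),\beta)\leq i(\alpha,\gamma)i(\gamma,\beta)+i(\alpha,\beta)$ and its half-twist analogue. The paper records a slightly sharper half-twist bound, $i(H_\gamma(\alpha),\beta)\leq \tfrac{1}{2}i(\alpha,\gamma)i(\beta,\gamma)+i(\alpha,\beta)$, but your weaker bound (with coefficient $1$, dominated by the Dehn-twist case) closes the same induction.
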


\noindent \emph{Proof.} This follows from induction, using the well-known fact that \[i(T_\gamma(\alpha),\beta)\leq i(\alpha,\gamma)i(\beta,\gamma) + i(\alpha,\beta)\] and the easily verified fact that \[i(H_\gamma(\alpha),\beta)\leq \frac{i(\alpha,\gamma)}{2}\cdot \frac{i(\beta,\gamma)}{2}\cdot 2 + i(\alpha,\beta)\] where $H_\gamma$ is the half-twist with base curve $\gamma$.\qed \bigskip

Now suppose $\alpha$ is a component of $\sigma(f)$.  There are two possibilities: either $\alpha$ is in the boundary of a component $S'$ of $S\setminus\sigma(f)$ on which some power of $f$ acts as a pseudo-Anosov, or $\alpha$ is the core curve around which some power of $f$ acts as a twist.  We deal with each case separately.

\subsection*{Case with pseudo-Anosov component.} Suppose $\alpha$ is a component of $\del S'$.  The maximum number of distinct isotopy classes of disjoint arcs on $S'$ is $3|\chi(S')|$, the number of edges in an ideal triangulation of $S'$ with its boundary shrunk to punctures.  Because every point of intersection of $\gamma$ with $\alpha$ has a component of $\gamma \cap S'$ on at least one side, there are at least $i(\alpha,\gamma)/2$ components of $\gamma \cap S'$.  Thus at least $i(\alpha,\gamma)/(6|\chi(S')|)$ of these run parallel.  Suppose we can find a power $P$ such that, for any essential arc $a$ through $S'$, we have $f^p(a)$ intersects $a$ for some $p \leq P$ (in particular, $f^p$ also fixes $S'$).  If $n$ arcs of $\gamma$ run parallel to $a$, these guarantee $n^2$ points of intersection between $f^p(\gamma)$ and $\gamma$.  Putting these estimates together, we have
\[(M+1)^{P\ell(f)+1} \geq M(M+1)^{\ell(f^p)}\geq i(f^p(\gamma),\gamma)\geq [i(\alpha,\gamma)/(6|\chi(S')|)]^2.\]

For this case, it remaines to compute $P$.  Let $a$ be the arc through $S'$ of interest.  Observe that there are at most $|\chi(S)/\chi(S')|$ copies of $S'$ possibly permuted by $f$, so for some $p_1 \leq |\chi(S)/\chi(S')|$, $f^{p_1}$ preserves $S'$.  Only $3|\chi(S')|$ distinct arc isotopy classes can be simultaneously disjoint, so for some $p_2 \leq 3|\chi(S')|$, $f^{p_1p_2}(a)$ intersects $a$.  Thus we can take $P = 3|\chi(S)|$ in the inequality above, and deduce
\begin{equation}\label{eq:pA}
6|\chi(S)|(M+1)^{(3|\chi(S)|\ell(f)+1)/2}\geq 6|\chi(S')|(M+1)^{(3|\chi(S)|\ell(f)+1)/2}\geq i(\alpha,\gamma).
\end{equation}

\subsection*{Dehn twist case}For the second case, we assume that $\alpha$ is not the boundary of a pseudo-Anosov component for $f$, so some power of $f$ acts as Dehn twisting around $\alpha$.  That is to say, let $\sigma' = \sigma(f)\setminus\{\alpha\}$, and let $S'$ be the component of $S\setminus\sigma'$ containing $\alpha$.  Some power of $f$ induces in $\Mod(S')$ a power of a Dehn twist about $\alpha$.  Suppose $p$ is this power, and consider $f^{3p}$.  We may decompose this as $f^{3p} = g^kh = hg^k$, where $g$ is a single Dehn twist about $\alpha$, its power $k$ is a nonzero multiple of three, and finally $h$ fixes each component of $\del S'$ while inducing the identity map in $\Mod(S')$.  

We claim that $i(f^{3p}\gamma,\gamma) \geq i(\alpha,\gamma)^2/2$.  This can be seen by lifting to the universal cover.  Consider the pre-image $P$ of geodesic representatives of $\del S' \cup \alpha$ in the universal cover, and choose a particular lift $\yt{\alpha}$ of $\alpha$.  Letting $C_-$ and $C_+$ be the components of $\mathbb{H}^2\setminus P$ on each side of $\yt{\alpha}$, we may choose a lift $\yt{g}$ of $g$ which fixes $C_-$ and acts on $C_+$ as the deck transformation $D_\alpha$ corresponding to $\alpha$.  Let $\yt{h}$ be a lift of $h$ that also fixes $C_-$, so that we may lift $f^{3p}$ to $\yt{f^{3p}} = \yt{h}\yt{g}^k$.

We choose $n=i(\alpha,\gamma)$ consecutive lifts $\{\yt{\gamma_i}\}_{i=1}^n$ of $\gamma$ intersecting $\yt{\alpha}$ on the restriction of a fundamental domain.  By examining the endpoints at infinity, one can see that the geodesic representative of $\yt{g}^k(\yt{\gamma_i})$ intersects either $D_\alpha^{k/3}(\yt{\gamma_j})$ or $D_\alpha^{2k/3}(\yt{\gamma_j})$ for each $j$ (see Figure \ref{fig2}).  Let us verify that the same is true replacing $\yt{g}^k$ with $\yt{f^{3p}}=\yt{h}\yt{g}^k$.  Let $\yt{x}$ be the point where $\yt{\gamma_i}$ intersects $\yt{\alpha}$.  Starting at $\yt{x}$ and tracing the $C_+$ and $C_-$ sides of $\yt{\gamma_i}$, let $\yt{\sigma}_+$ and $\yt{\sigma}_-$ be the first lifts of components of $\del S' \cup \alpha$ encountered on each side respectively.  Note that, because $h$ induces the identity map on $S'$, the lift $\yt{h}$ preserves all the lifts of $\del S' \cup \alpha$ bordering $C_+$ and $C_-$, and therefore the half-spaces of $\yt{\sigma}_-$ and $D_\alpha^k(\yt{\sigma}_+)$ which do not contain $\alpha$ (and do contain the endpoints of $\yt{g}^k(\yt{\gamma_i})$).  Therefore these half-spaces contain the endpoints of $\yt{f^{3p}}(\yt{\gamma_i})$.  By their arrangement on the boundary at infinity, any geodesic between and not intersecting $D_\alpha^{k/3}(\yt{\gamma_i})$ and $D_\alpha^{2k/3}(\yt{\gamma_i})$ is forced to essentially intersect $\yt{f^{3p}}(\yt{\gamma_i})$; such geodesics include a translate of each $\gamma_j$ by either $D_\alpha^{k/3}$ or $D_\alpha^{2k/3}$.  Clearly, $\yt{f^{3p}}(\yt{\gamma_i})$ and $\yt{f^{3p}}(\yt{\gamma_j})$ are disjoint for $i\neq j$, and by equivariance, the $n^2$ points of intersection can be pulled back to $(\cup_i\gamma_i)\cap (C_+\cup C_- )$.  It is not hard to argue that any point on the quotient surface has at most two pre-image points on this union of segments.  Therefore at least $n^2/2$ unique intersections with $\gamma$ and $f^{3p}(\gamma)$ appear on the surface, as claimed.

\begin{figure}[H]
\begin{center}
\includegraphics[height=3in]{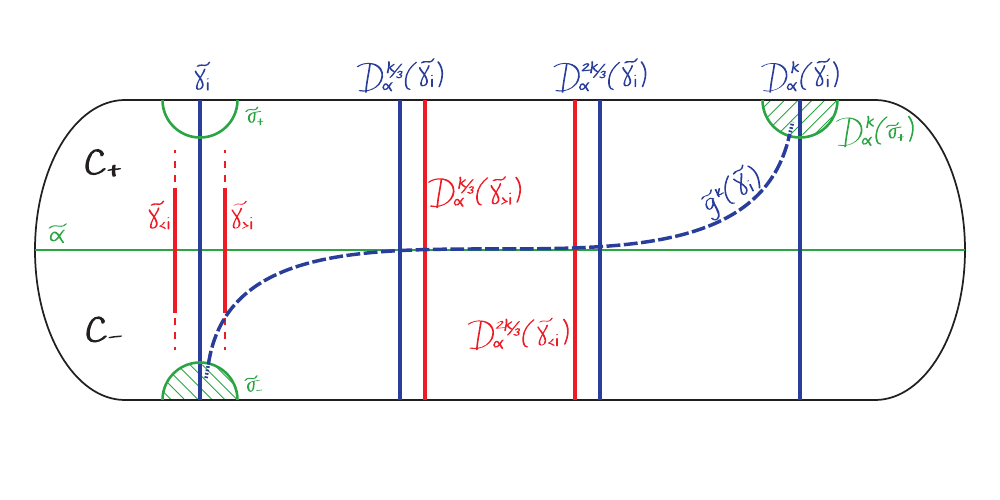}
\caption{Schematic of $\bH^2$ distorted to emphasize action by $D_\alpha$.  The endpoints of $\yt{g}^k(\yt{\gamma_i})$ lie within the shaded half-spaces, which are preserved by the map $\yt{h}$.  Note that $\yt{\gamma_j}$ for $j \neq i$ may or may not intersect $\yt{\sigma}_\pm$.}
\label{fig2}
\end{center}
\end{figure}

So far we have \[(M+1)^{3p\cdot\ell(f)+1} \geq M(M+1)^{\ell(f^{3p})} \geq i(f^{3p}(\gamma),\gamma) \geq i(\alpha,\gamma)^2/2,\]
assuming $f^p$ restricts on a subsurface $S'$ to Dehn twisting about $\alpha$; let us now bound the power $p$.  Let $S_1'$ be one of the at most two components of $S\setminus\sigma(f)$ whose boundary contains $\alpha$.  As before, for some $p_0 \leq |\chi(S)/\chi(S_1')|$, $f^{p_0}$ preserves $S_1'$.  Some power $p_1$ of $f^{p_0}$ induces an identity map on $S_1'$.  If $S_1'$ is the only component of $S\setminus\sigma(f)$ whose boundary contains $\alpha$ (so that $S'$ is $S_1'$ glued along $\alpha$), then $f^{p_0p_1}$ Dehn twists around a neighborhood of $\alpha$.  The other possibility is that $S'\setminus\alpha$ consists of two components $S_1'$ and $S_2'$; then we need an additional power $p_2$ so that $f^{p_0p_1p_2}$ induces identity maps in both $\Mod(S_1')$ and $\Mod(S_2')$, and can therefore only be Dehn twisting on $S'$.  For $i\in\{1,2\}$, the powers $p_i$ are bounded above by the maximal order of a mapping class of a surface $S_i'$ with boundary.  An upper bound is often given as $42|\chi(S_i')|$ based on the fact that the smallest hyperbolic 2-orbifold has area $\pi/21$; however, the smallest cusped hyperbolic 2-orbifold has area $\pi/6$ \cite{Siegel}.  For this application we can thus use an upper bound for $p_i$ of $12|\chi(S_i')|$.  We have \begin{equation}\label{eq:twist}(M+1)^{(3\cdot144\chi(S)^2\ell(f)+2)/2} \geq i(\alpha,\gamma).\end{equation}

Theorem \ref{t:computation} follows from comparing inequalities (\ref{eq:pA}) and (\ref{eq:twist}).\qed

\section{The list algorithm}

In order to find the canonical reducing system $\sigma(f)$ for a given mapping class $f$, we make use of Ivanov's elegant characterization, which generalizes to subgroups of $\Mod(S)$ \cite{Iv2}.  Under this expanded definition, $\sigma(f)$ is really a property of $\langle f \rangle$.  Recall that there is a power $N$, universal in $S$, such that for any $f \in \mathrm{Mod}(S)$, the mapping class $f^N$ is guaranteed to fix individually, rather than to permute, the components of $\sigma(f)$.  In particular, one may take $N = \xi(S)!$, where $\xi(S)$ is the maximal number of mutually disjoint, isotopically distinct curves on $S$, given by $3g + p - 3$ for a surface with $g$ genus and $p$ punctures.  Then the canonical reducing system $\sigma(f)$ is the set of simple closed curves $\alpha$ such that
\begin{itemize}
\item $f^N(\alpha)=\alpha$, and
\item if $i(\alpha,\beta) > 0$, then $f^N(\beta)\neq\beta$.
\end{itemize}

For our algorithm, we need to know we can find a ``test'' curve $\beta$ of short distance that will disqualify a candidate for $\sigma(f)$ that fulfills the first bullet but not the second.  We use the following:

\begin{lemma} \label{disqualifiers}For $f\in\Mod(S)$, write \[F=F(\ell_X(f))=C^{\ell_X(f)},\] where $X$ is the Artin--Dehn--Humphries--Lickorish generating set for $\Mod(S)$ and $C$ is the constant furnished by Theorem \ref{t:estimate}, so that every $\alpha \in \sigma(f)$ has $\ell(\alpha) \leq F$, where $\ell$ is the length function defined by geometric intersection number with $\Gamma$, the set of base curves for the generating twists.

Suppose that $\delta$ is a simple closed curve with $f^N(\delta)=\delta$ and $\ell(\delta) \leq F$.  If $\delta$ is not a component of $\sigma(f)$, then there exists $\beta$ such that $i(\delta,\beta)>0$ but $f^N(\beta)=\beta$, and $\ell(\beta) \leq 2F+4$.
\end{lemma}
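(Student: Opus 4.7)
Plan: I will invoke Ivanov's characterization of $\sigma(f)$ recalled just above the lemma, together with the structure of $f^N$ on subsurfaces cut by $\sigma(f)$, and split into two cases based on how $\delta$ sits relative to $\sigma(f)$. Since $f^N(\delta)=\delta$ but $\delta\notin\sigma(f)$, the second bullet of Ivanov's characterization must fail for $\delta$: some curve $\beta$ with $i(\delta,\beta)>0$ and $f^N(\beta)=\beta$ exists. The content of the lemma is producing such $\beta$ with an explicit length bound.

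In the first case, suppose $\delta$ has positive intersection with some component $\alpha$ of $\sigma(f)$. Take $\beta=\alpha$. Then $f^N(\beta)=\beta$ by definition of $\sigma(f)$, and $\ell(\beta)\leq F$ by Theorem \ref{t:estimate}, which satisfies the bound with room to spare.

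In the second case, $\delta$ is disjoint from $\sigma(f)$, so $\delta$ lies in a single component $S''$ of $S\setminus\sigma(f)$. Per the description in Section \ref{s:back}, $f^N$ restricts to $S''$ as either the identity or a pseudo-Anosov; since $\delta\subset S''$ is a simple closed curve fixed by $f^N$ and pseudo-Anosov maps have no periodic simple closed curves, $f^N$ restricted to $S''$ must be the identity. Further, $\delta$ is non-peripheral in $S''$, for otherwise it would be parallel to a boundary component of $S''$, hence isotopic to a curve in $\sigma(f)$, contradicting $\delta\notin\sigma(f)$. Consequently, every simple closed curve contained in $S''$ is fixed by $f^N$, and it suffices to exhibit $\beta\subset S''$ with $\ell(\beta)\leq 2F+4$ essentially crossing $\delta$. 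I would build such $\beta$ from two parallel copies $\delta_+,\delta_-$ of $\delta$ (bounding a thin annular neighborhood $N(\delta)$) joined by two short bridges routed through $S''\setminus N(\delta)$ so as to cross $\delta$ transversely. The arcs along $\delta_\pm$ contribute at most $\ell(\delta_+)+\ell(\delta_-)\leq 2F$, and each bridge can be taken short (contained in only a few complementary regions of $\Gamma$, contributing at most $2$ each), yielding the claimed bound.

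The main obstacle is ensuring $i(\beta,\delta)>0$ in the mapping class sense. The naive band-sum of $\delta_+$ and $\delta_-$ with both bridges confined to $N(\delta)$ produces a curve homotopic to a power of $\delta$, which isotopes off $\delta$ entirely and fails essentiality. The routing must genuinely exploit the topology of $S''$ outside $N(\delta)$: in the non-separating subcase, by passing through a loop witnessing the non-separating character of $\delta$; in the separating subcase, by reaching into both complementary pieces (each non-trivial by non-peripherality of $\delta$). In either subcase one verifies by a bigon argument (or a mod-2 homology computation in the non-separating case) that the resulting $\beta$ is non-peripheral and cannot be isotoped off $\delta$. This essentiality check is the most delicate part of the argument and requires the subcase analysis together with care that short bridges along complementary regions of $\Gamma$ suffice, keeping the total length additive correction at most the constant $4$.
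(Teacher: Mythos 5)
There is a genuine gap in your Case~2, which is in fact the only case, and it is precisely where all the work lies. You observe (correctly) that $\delta$ must sit inside a component $S''$ of $S\setminus\sigma(f)$ on which $f^N$ acts trivially, so it suffices to exhibit some essential curve $\beta\subset S''$ with $i(\beta,\delta)>0$ and $\ell(\beta)\le 2F+4$. But your proposed construction of $\beta$ from two parallel push-offs $\delta_\pm$ joined by ``short bridges'' has no mechanism for controlling the length of the bridges. The length function here is $\ell(\beta)=\sum_j i(\beta,\gamma_j)$, intersection with the fixed filling set $\Gamma$, and a bridge in $S''\setminus N(\delta)$ from $\delta_+$ to $\delta_-$ has no a priori bound on how many times it must cross $\Gamma$: the complementary regions of $\Gamma$ are disks (or once-punctured disks), and there is no topological reason a non-trivial bridge can be routed through only a bounded number of them. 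The assertion that each bridge ``contributes at most $2$'' is exactly the content that needs proof, and it is not supplied. You also flag but do not resolve the essentiality issue, and the two problems interact: the routings that make $\beta$ essential are exactly the ones whose length you cannot bound. In short, the strategy of building $\beta$ out of pieces of $\delta$ itself fights against the length bound rather than exploiting it.

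The paper instead builds $\beta$ out of pieces whose lengths are controlled for structural reasons: a generator base curve $\gamma\in\Gamma$ (which crosses $\delta$ because $\Gamma$ fills, and has $\ell(\gamma)\le 2$) and components of $\sigma(f)$ (each of length $\le F$ by Theorem~\ref{t:estimate}). Concretely: if the $\gamma$ crossing $\delta$ happens to be $f^N$-fixed, take $\beta=\gamma$ and you are done with $\ell(\beta)\le 2$. Otherwise $\gamma$ cannot lie in a pseudo-Anosov piece (else $\delta$ would meet that piece and not be $f^N$-fixed), so $\gamma$ meets $\sigma(f)$; trace an arc $a\subset\gamma\cap S'$ through a crossing of $\delta$ out to $\sigma(f)$ on both ends, let $\alpha$ be the one or two $\sigma(f)$-curves hit, and take $\beta$ to be an essential boundary component of a regular neighborhood of $a\cup\alpha$. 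That $\beta$ lies in $S'$ (hence is $f^N$-fixed), crosses $\delta$, and decomposes into at most two copies of $a$ (each $\le\ell(\gamma)\le 2$) and at most two arcs along $\sigma(f)$-curves (each $\le F$), giving $\ell(\beta)\le 2F+4$. The arc $a$ plays exactly the role you want your bridges to play, but with a built-in length bound. One further small point: your Case~1 ($\delta$ intersecting $\sigma(f)$) is vacuous. By Ivanov's characterization stated just above the lemma, if $\alpha\in\sigma(f)$ and $i(\alpha,\delta)>0$ then $f^N(\delta)\neq\delta$, contradicting the hypothesis; so the split into two cases is unnecessary, and all the effort should go into what you call Case~2.
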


\emph{Proof.} First observe that $\ell(\gamma) \leq 2$ for each $\gamma \in \Gamma$.  Since $\Gamma$ fills $S$, some $\gamma \in \Gamma$ must intersect $\delta$.  If this $\gamma$ is fixed by $f^N$, we may take $\beta = \gamma$.  So suppose $\gamma$ is not fixed by $f^N$.  It cannot be that $\gamma$ lies entirely in a pseudo-Anosov component of $f$, since then $\delta$ intersects that component, contradicting that $f^N$ fixes $\delta$.  Therefore $\gamma$ intersects some component of $\sigma(f)$.  Consider a point of intersection of $\gamma$ and $\delta$.  Let $a$ be the arc of $\gamma$ one obtains by tracing that curve in both directions until it first hits $\sigma(f)$ on each side.  This gives an essential arc of $\gamma \cap S'$ intersecting $\delta$ at least once, where $S'$ is the component of $S\setminus \sigma(f)$ to which $f$ restricts as the identity, and which contains $\delta$.  Let $\alpha$ consist of the one or two components of $\sigma(f)$ intersecting $a$ at its endpoints.  Since $a$ is an essential arc, at least one component of the regular neighborhood of $a \cup \alpha$ is an essential curve of $S$ lying completely in $S'$ and thus fixed by $f^N$; let this component be $\beta$.  By construction, the length of $\beta$ is less than the length of $\alpha$ plus twice the length of $\gamma$, which in turn is less than $2F+4$.  Also by construction, $\delta$ intersects $\beta$, and $f^N(\beta)=\beta$; thus $\beta$ is the desired disqualifying curve.\qed

\subsection{The list-and-check algorithm}\label{s:listalg}

We are given a word $f$ in Artin--Dehn--Humphries--Lickorish generators $X = \{T_{\gamma_i}\}$.  Let $F=F(\ell_X(f))$ be the upper bound on length of components of $\sigma(f)$ given by Theorem \ref{t:estimate} and referenced in Lemma \ref{disqualifiers}.  The algorithm proceeds as follows:

\begin{itemize}
\item[(1)] Populate the set \textsc{list} with the words representing all simple closed curves of length at most $2F+4$.
\item[(2)] For each curve $\gamma \in$ \textsc{list}, test if $f^N(\gamma)=\gamma$.  If this is true, add $\gamma$ to the set \textsc{test-list}.
\item[(3)] Let \textsc{shortlist} be the subset of curves in \textsc{test-list} with length at most $F$.
\item[(4)] For each curve $\gamma \in$ \textsc{shortlist}, test for intersection with curves in \textsc{test-list}.  If there are no intersections, then $\gamma \in \sigma(f)$.
\item[(5)] If $\sigma(f)$ is not empty, then $f$ is reducible and its canonical reducing system is given.  If $\sigma(f)$ is empty, but all the curves $\gamma_i$ appear in \textsc{shortlist}, then $f$ is finite-order.  Otherwise $f$ is pseudo-Anosov.
\end{itemize}

\subsection{Implementation and complexity}\label{s:complexity}
We need to encode simple closed curves so that we may compute their intersection numbers, and also their images under our fixed set of generators of $\mathrm{Mod}(S)$.  Here we describe an exponential-time implementation of our algorithm using a triangulation of the surface for the encoding, and computational methods described by Schaefer, Sedgwick, and Stefankovic \cite{SSS1, SSS2}.  Alternatively, our algorithm may be implemented using Dehn--Thurston coordinates as described in \cite{ThurstonD}.  However, both approaches are currently limited to mapping class groups of surfaces with punctures.  While either choice of coordinates certainly makes sense for curves on closed surfaces, we do not know how to compute intersection number in polynomial time.

A fixed triangulation of the surface allows one to describe a simple closed curve in terms of \emph{normal coordinates}: one takes a representative of the simple closed curve which intersects the edges of the triangulation a minimal number of times, and then one produces a list with entries for each edge of the triangulation, where the entries record how many times a representative of the curve intersects that edge.  We require that the representative avoids vertices and enters and exits each triangle through different edges, and we require that the vertices of the triangulation lie on the boundary of the surface; then two sets of normal coordinates represent the same curve if and only if the coordinates are equal \cite{SSS1}.  In this case, Schaefer-Sedgwick-Stefankovic have polynomial-time algorithms that compute the coordinates of a curve after a Dehn twist about another curve, and the intersection number between curves \cite{SSS2}.

Our list-and-check algorithm starts with a function $F(\ell_X(f))$ for identifying sufficiently short curves.  To be compatible with the algorithms of Schaefer-Sedgwick-Stefankovic, we use a new notion of length, which we call \emph{triangulation length}: the number of times a curve intersects the triangulation.  That is, where $T$ is a fixed minimal triangulation, for any curve $\gamma$, let $\ell_T(\gamma)$ be the sum of all entries in the (unique) normal coordinates for $\gamma$.  Now we must relate $\ell(\gamma)$ to $\ell_T(\gamma)$.  Recall that $X$ consists of twist and half-twist generators whose base curves fill the surface.  We may assume all the vertices of our triangulation lie on the boundary punctures, and that all edges of the triangulation are geodesics in some hyperbolic metric, so that minimal intersection is realized between any edge of the triangulation and any geodesic curve.  Furthermore, since all vertices are on the boundary, and minimal intersection allows no bigons, the count of intersections of any geodesic curve with each edge of the triangulation gives its normal coordinates.  The filling curves cut $S$ into a collection of disks and punctured disks.  Let $K$ be the maximum number of arcs of the triangulation through any of these components.  Observe that, for any curve $\gamma$, we have $K\cdot\ell(\gamma) > \ell_T(\gamma)$.  Therefore the task of listing all curves with $\ell < L$ is fulfilled by listing all curves with $\ell_T < LK$.  We can create \textsc{list} in Step (1) of the algorithm using this encoding, in time polynomial in $F$, and thus exponential in $\ell_X(f)$.  We have the necessary algorithms for Steps (2) and (4) by Theorem 2 of \cite{SSS2}, again in time polynomial in $F$.  The complexity of the entire list-and-check algorithm implemented this way is exponential in word length.

We know of a third approach to curve-related computations, by encoding curves as words in the fundamental group of the surface.  Then it is straightforward to compute images of a curve under products of a finite list of generators, and \cite{BirmanSeries84, BirmanSeries87, CohenLustig} give polynomial-time algorithms to count geometric intersection number, whether or not the surface has boundary.  However, we do not know how to enumerate only words that represent simple curves, short of enumerating all words and checking for self-intersection.  This way, the list-and-check algorithm requires time doubly exponential in word length.

\section{The cover algorithm}
In this section, we prove Theorem \ref{t:covers}.  Let $X\subset\Mod(S)$ be a finite generating set for $\Mod(S)$.  Fixing a basepoint on $S$, we obtain $\Mod(S)$ as a quotient of $\Mod^1(S)$, which is the group of based homotopy classes of orientation-preserving homeomorphisms of $S$.  Viewing the chosen basepoint as a basepoint for $\pi_1(S)$, we obtain an action of $\Mod^1(S)$ on $\pi_1(S)$.  We fix a finite generating set $Y$ for $\pi_1(S)$.  Given $Y$, we compute conjugacy representatives $\{p_1,\ldots,p_n\}\subset\pi_1(S)$ of small loops about each puncture of $S$.  In all of our calculations, we disregard the conjugacy classes of these elements and their powers.  This is a reasonable assumption, since once we identify conjugacy representatives of small loops about the punctures of $S$, the solution to the conjugacy problem in $\pi_1(S)$ gives us an effective way of determining if a homotopy class of loops on $S$ is boundary parallel (see Section \ref{s:effective}).

In what follows, $\ell_X(\cdot)$ denotes word length in $\Mod(S)$ with respect to $X$, and likewise $\ell_Y(\cdot)$ is length in $\pi_1(S)$ using $Y$.
We can lift $X$ to $\yt{X}\subset\Mod^1(S)$ by choosing arbitrary preimages of the mapping classes in $X$.  Such a choice gives us a set-theoretic splitting $\Mod(S)\to\Mod^1(S)$.  For $x\in X$, the splitting $x\mapsto \yt{x}$ allows us to determine a (maximum) \emph{expansion factor} $\lambda$ for mapping classes in $\Mod(S)$, which we define by
\[\lambda=\max_{x\in X}\max_{y\in Y}\ell_Y(\yt{x}(y)).\]
It is immediate from the definition of the expansion factor that if $f\in\Mod(S)$ satisfies $\ell_X(f)=n$ and $w\in\pi_1(S)$, we have
\begin{equation}\label{e:expansionfactor}
\ell_Y(\yt{f}(w))\leq\lambda^n\cdot\ell_Y(w)
\end{equation}
where $\yt{f}$ is any lift of $f$ in $\grp{\yt{X}}$ of length $n$.  Thus, if $[w]$ is the conjugacy class of $w$ in $\pi_1(S)$, then the shortest representative for $f([w])$ in $\pi_1(S)$ has length at most $\lambda^n\cdot \ell_Y(w)$.

Given a mapping class $f\in\Mod(S)$ presented as a word in the generating set $X$, we would like to find a finite cover $\yt{S}\to S$ to which $f$ lifts, whose degree is controlled by a computable function in $\ell_X(f)$, and such that the action of any lift of $f$ on $H_1(\yt{S},\bC)$ determines the Nielsen--Thurston classification (compare with Theorem \ref{t:covers}).  The idea is to build $\yt{S}$ as a regular cover of $S$ and to exploit the interplay between the action of the deck group and the action of a lift of $f$ on $H_1(\yt{S},\bC)$.  The proof of Theorem \ref{t:covers} is fairly quick after we gather the necessary facts.

\begin{lemma}[Lemma 3.1 in \cite{KobGeom}]\label{l:faithful}
Let $\yt{S}\to S$ be a regular finite cover with deck group $Q$.  Then the action of $Q$ on $H_1(\yt{S},\bC)$ is faithful.  Moreover, we have an isomorphism of $Q$-representations: \[H_1(\yt{S},\bC)\cong (\bC[Q]/\bC)^{|\chi(S)|}\oplus \bC^{b_1(S)}.\]  Here, $\bC[Q]/\bC$ denotes the regular representation of $Q$ modulo its trivial summand.
\end{lemma}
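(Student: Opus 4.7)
The plan is a straightforward equivariant computation with the cellular chain complex of $\widetilde{S}$. First, fix a finite CW structure on $S$ with $c_i$ cells of dimension $i$ and lift it to a $Q$-equivariant CW structure on $\widetilde{S}$; since $Q$ acts freely on $\widetilde{S}$, each $C_i(\widetilde{S},\bC)$ is a free $\bC[Q]$-module of rank $c_i$, so $[C_i(\widetilde{S},\bC)] = c_i[\bC[Q]]$ in the representation ring $R(Q)$.

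Next, I would exploit the semisimplicity of $\bC[Q]$ (Maschke's theorem): every short exact sequence of $\bC[Q]$-modules splits, so the equivariant Euler characteristic gives the identity
\[\sum_{i} (-1)^i [H_i(\widetilde{S},\bC)] \;=\; \sum_{i} (-1)^i [C_i(\widetilde{S},\bC)] \;=\; \chi(S)[\bC[Q]]\]
in $R(Q)$. By connectedness, $[H_0(\widetilde{S},\bC)] = [\bC]$, the trivial representation. When $S$ is closed, the fundamental class gives $[H_2(\widetilde{S},\bC)] = [\bC]$, since the deck group acts by orientation-preserving homeomorphisms; when $S$ has a puncture, $H_2(\widetilde{S},\bC)=0$. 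Solving for $[H_1]$ and using the splitting $[\bC[Q]] = [\bC] + [\bC[Q]/\bC]$, a short calculation in each of the two cases produces
\[ [H_1(\widetilde{S},\bC)] \;=\; |\chi(S)|\,[\bC[Q]/\bC] \;+\; b_1(S)\,[\bC],\]
which is the claimed $Q$-equivariant isomorphism.

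For faithfulness, since $\chi(S) < 0$ we have $|\chi(S)| \geq 1$, so $H_1(\widetilde{S},\bC)$ contains $\bC[Q]/\bC$ as a summand, and it suffices to verify the latter is a faithful $Q$-representation. If some $q\in Q$ acted trivially on $\bC[Q]/\bC$, then (together with the automatic trivial action on the complementary summand $\bC$) it would act trivially on the regular representation $\bC[Q] = \bC \oplus \bC[Q]/\bC$; but the regular representation of any finite group is faithful, so $q=1$. The only real point of care in the whole argument is confirming that the top homology of a closed $\widetilde{S}$ is the \emph{trivial} $Q$-representation rather than a sign-twisted version, which reduces to observing that deck transformations of an orientable cover preserve the fixed fundamental class.
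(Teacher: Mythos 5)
Your argument is correct, and it is the standard proof of this statement (the lemma is an instance of the Chevalley--Weil theorem); note that the paper itself does not prove it but cites it verbatim from Lemma~3.1 of \cite{KobGeom}, so there is no internal proof to compare against. A couple of small points worth being explicit about: the identity $\sum_i(-1)^i[H_i]=\sum_i(-1)^i[C_i]$ in the representation ring requires semisimplicity (Maschke) to split both $0\to Z_i\to C_i\to B_{i-1}\to 0$ and $0\to B_i\to Z_i\to H_i\to 0$, which you invoke; and the orientation-preserving claim for deck transformations is automatic because a deck transformation $f$ satisfies $p\circ f=p$, so it preserves the pullback orientation on $\yt S$, making $H_2(\yt S,\bC)$ the trivial $Q$-module in the closed case. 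The bookkeeping in both cases ($b_1=2+|\chi|$ closed, $b_1=1+|\chi|$ punctured) checks out, and the faithfulness argument via the regular-representation summand is correct since $\chi(S)<0$ forces $|\chi(S)|\geq 1$.
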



In what follows, we identify each essential, nonperipheral closed curve $c$ on $S$ with the pair of nontrivial conjugacy classes $\{[c],[c^{-1}]\} \subset \pi_1(S)$ it determines.  It is well-known that a mapping class $f \in \Mod(S)$ fixes the isotopy class of $c$ if and only if it preserves $\{[c],[c^{-1}]\}$ set-wise.

\begin{lemma}\label{l:conjsep}

Given $\{c_1,\dots,c_s\}$ a finite list of essential, nonperipheral homotopy classes in $\pi_1(S)$, and $f \in \Mod(S)$, there exists a finite characteristic quotient $Q$ of $\pi_1(S)$ such that if $f(c_i)\neq c_i$ as isotopy classes of curves, then the images in $Q$ of the pairs $\{[c_i],[c_i^{-1}]\}$ and $\{f([c_i]),f([c_i^{-1}])\}$ belong to distinct (conjugacy class, inverse conjugacy class) pairs in $Q$.  Furthermore, $|Q| < N$, where $N$ may be computed from $\ell_X(f)$, the length $s$ of the list of curves, and an upper bound for $\ell_Y(c_i)$.
\end{lemma}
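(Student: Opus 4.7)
The plan is to reduce the lemma to an effective conjugacy separability statement for $\pi_1(S)$ and then apply it to a bounded list of pairs of non-conjugate elements.

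First, I would translate the hypothesis ``$f(c_i)\neq c_i$ as isotopy classes'' into statements about conjugacy classes in $\pi_1(S)$. Since $f$ is orientation-preserving, $f$ fixes the unoriented isotopy class of a simple closed curve $c_i$ exactly when it preserves the pair $\{[c_i],[c_i^{-1}]\}$ of conjugacy classes. So $f(c_i)\neq c_i$ means $f([c_i])$ lies in neither $[c_i]$ nor $[c_i^{-1}]$ in $\pi_1(S)$. For each such index $i$ I therefore need the image of $f([c_i])$ in $Q$ to be conjugate to the image of neither $[c_i]$ nor $[c_i^{-1}]$; this amounts to separating at most $2s$ pairs of non-conjugate elements of $\pi_1(S)$ in $Q$.

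Next I would control the $Y$-lengths of the elements to be separated. Let $B$ be the given upper bound for $\ell_Y(c_i)$. By inequality (\ref{e:expansionfactor}), the conjugacy class $f([c_i])$ has a representative of $Y$-length at most $\lambda^{\ell_X(f)}\cdot\ell_Y(c_i)\leq\lambda^{\ell_X(f)}B$. Thus every element appearing in any of the at most $2s$ pairs has $Y$-length bounded by $L=\lambda^{\ell_X(f)}B$, a quantity depending computably on $\ell_X(f)$ and $B$.

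Now I would invoke the effective conjugacy separability result for $\pi_1(S)$ developed in Section \ref{s:effective}: there is a computable function $N_{\mathrm{sep}}(L)$ such that any two non-conjugate elements of $\pi_1(S)$ of $Y$-length at most $L$ have non-conjugate images in some characteristic finite quotient of $\pi_1(S)$ of cardinality at most $N_{\mathrm{sep}}(L)$. Applying this to each of the at most $2s$ pairs yields characteristic finite-index subgroups $K_1,\ldots,K_{2s}\leq\pi_1(S)$, each of index at most $N_{\mathrm{sep}}(L)$. Setting $K=\bigcap_{j}K_j$, the intersection of finitely many characteristic subgroups is characteristic, so $Q=\pi_1(S)/K$ is a characteristic quotient; moreover $Q$ injects into $\prod_j \pi_1(S)/K_j$, giving $|Q|\leq N_{\mathrm{sep}}(L)^{2s}=:N$, a computable function of $\ell_X(f)$, $s$, and $B$. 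By construction, for every $i$ with $f(c_i)\neq c_i$ the images in $Q$ of $f([c_i])$, $[c_i]$, and $[c_i^{-1}]$ realize the required separation of (conjugacy class, inverse conjugacy class) pairs.

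The main obstacle is the effective conjugacy separability ingredient itself, i.e., producing the computable function $N_{\mathrm{sep}}(L)$. This is the substantive content underlying the lemma (and is highlighted in the abstract as a principal contribution of the paper), and it is to be established independently in \S\ref{s:effective}. Once that tool is in hand, Lemma \ref{l:conjsep} is essentially a packaging result: it consolidates the bounds obtained for a single non-conjugate pair into one characteristic quotient that simultaneously separates all relevant pairs, while tracking that the total index remains computably bounded in terms of the data $(\ell_X(f), s, B)$.
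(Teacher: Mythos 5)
Your proposal matches the paper's argument: the $s=1$ case uses effective conjugacy separability from Section~\ref{s:effective} together with the expansion-factor bound (\ref{e:expansionfactor}), and the general case intersects the kernels of the resulting characteristic quotients, observing that the intersection is again characteristic and its index is bounded by the product of the orders. Your version simply makes explicit the count of (at most) $2s$ pairs to separate and the resulting bound $N_{\mathrm{sep}}(L)^{2s}$.
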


\begin{proof} When $s=1$, the lemma follows from the effective conjugacy separability in free groups and surface groups (see Section \ref{s:effective}), and the expansion factor observation noted in (\ref{e:expansionfactor}) above.  In general, one can produce a quotient $Q_i$ for each conjugacy class and then intersect the kernels.  The intersection of the kernels will again be a characteristic subgroup of $\pi_1(S)$, and the index of the intersection will be controlled by the product of the orders of the quotients. \end{proof}

\begin{lemma}\label{l:characters}  Let $\yt{S}$ be the connected cover of $S$ determined by a finite characteristic quotient $Q$ of $\pi_1(S)$.  Suppose that $g \in \pi_1(S)$ and its image under $f \in \Mod(S)$ have non-conjugate images in $Q$.  Then any lift $\yt{f}$ of $f$ acts nontrivially on $H_1(\yt{S},\bC)$; in particular, the images of $g$ and $f(g)$ in $Q$ induce different characters on $H_1(\yt{S},\bC)$ viewed as a representation of $Q$.
\end{lemma}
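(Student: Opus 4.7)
The plan is to extract from the hypothesis a non-inner automorphism of $Q$ induced by $f$, and then use the intertwining relation between $\yt{f}_*$ and the deck-group action together with Lemma \ref{l:faithful} to rule out $\yt{f}_*$ being trivial. Because $\pi_1(\yt{S})$ is characteristic in $\pi_1(S)$, the mapping class $f$ induces an automorphism $\phi\colon Q\to Q$, well-defined up to inner automorphism. For a fixed lift $\yt{f}$ of $f$, a representative of $\phi$ is pinned down by the relation $\yt{f}\circ q\circ\yt{f}^{-1}=\phi(q)$ on deck transformations, and it satisfies $\phi([g]_Q)=[f(g)]_Q$ in $Q$. Changing the lift by $q_0\in Q$ composes $\phi$ with the inner automorphism $\mathrm{Inn}(q_0)$, so the class of $\phi$ in $\Out(Q)$ is intrinsic to $f$. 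The hypothesis that $[g]_Q$ and $\phi([g]_Q)=[f(g)]_Q$ are non-conjugate in $Q$ therefore forces $\phi$ to be non-inner, and in particular $\phi\neq\mathrm{id}$ in $\Aut(Q)$ for every choice of lift.

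Applying $H_1(-,\bC)$ to the conjugation relation yields $\yt{f}_*\circ q_*\circ\yt{f}_*^{-1}=\phi(q)_*$ in $\mathrm{GL}(H_1(\yt{S},\bC))$. If $\yt{f}_*$ were the identity, this would give $q_*=\phi(q)_*$ as linear maps for every $q\in Q$, and the faithfulness of the $Q$-action provided by Lemma \ref{l:faithful} would then force $\phi=\mathrm{id}$ in $\Aut(Q)$, contradicting the preceding paragraph. Hence $\yt{f}_*$ acts nontrivially on $H_1(\yt{S},\bC)$.

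For the ``in particular'' clause, character theory supplies the separation: since $[g]_Q$ and $[f(g)]_Q$ are non-conjugate in $Q$ and the irreducible characters of $Q$ span the class functions on $Q$, there exists a nontrivial irreducible $Q$-representation $\rho$ with $\chi_\rho([g]_Q)\neq\chi_\rho([f(g)]_Q)$. By Lemma \ref{l:faithful}, every nontrivial irreducible of $Q$ occurs in the summand $(\bC[Q]/\bC)^{|\chi(S)|}\subset H_1(\yt{S},\bC)$, so $[g]_Q$ and $[f(g)]_Q$ have distinct traces on the $\rho$-isotypic component of $H_1(\yt{S},\bC)$. The main point requiring care is the interplay between the choice of lift $\yt{f}$ and the inner-automorphism ambiguity in $\phi$, since the argument must produce $\phi\neq\mathrm{id}$ for every lift simultaneously; once that is handled, the remainder is a clean combination of Lemma \ref{l:faithful} with the intertwining relation on $H_1(\yt{S},\bC)$.
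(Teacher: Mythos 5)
Your proof is correct and follows essentially the same route as the paper: both exploit the intertwining relation $\yt{f}_*\circ q_*\circ\yt{f}_*^{-1}=(f(q))_*$, the separation of non-conjugate elements by irreducible characters, and Lemma \ref{l:faithful}. The one small organizational difference is in how nontriviality of $\yt{f}_*$ is deduced: you argue abstractly that $\phi$ is non-inner (hence not the identity for any lift) and then invoke faithfulness of the $Q$-action to rule out $\yt{f}_*=\mathrm{id}$, whereas the paper extracts a single nontrivial irreducible character $\chi$ with $\chi\neq\chi\circ f^{-1}$ and observes directly that $\yt{f}_*$ sends the (nonzero) $\chi$-isotypic summand to the distinct $\chi\circ f^{-1}$-isotypic summand. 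Your version is a slightly cleaner modularization that separates ``nontrivial action'' from ``character separation,'' but the underlying ingredients are identical, and both correctly handle the lift ambiguity (you via well-definedness of $[\phi]\in\Out(Q)$, the paper via conjugation-invariance of characters).
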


\begin{proof}
Since $Q$ is a characteristic quotient of $\pi_1(S)$, the mapping class $f$ lifts to $\yt{S}$, and we choose a fixed lift $\yt{f}$.  The induced isomorphism of vector spaces $\yt{f}_*:H_1(\yt{S},\bC)\to H_1(\yt{S},\bC)$ generally depends on the choice of lift, but we claim that its nontriviality is independent of the choice of lift.  To see this, we fix a representative of $f\in\Aut(\pi_1(S))$, which we will also call $f$.  This way, we get an unambiguous action of $f$ on $H_1(\yt{S},\bC)$ as well as on the quotient group $Q$.  We will show that no matter what representative of $f$ we choose in $\Aut(\pi_1(S))$, the action of $f$ on $H_1(\yt{S},\bC)$ is nontrivial.

Note that the naturality of the $f$-- and $Q$--actions on $H_1(\yt{S},\bC)$ imply that $q\cdot f(v)=f(f^{-1}(q)\cdot v)$, for any $q \in Q$ and $v \in H_1(\yt{S},\bC)$.  If $q$ is the image of $g$ in $Q$, we have that $f(q)$ is the image of $f(g)$ in $Q$.  The hypothesis that $q$ and $f(q)$ are not conjugate implies that there is an irreducible character $\chi$ of $Q$ such that $\chi(q)\neq\chi(f^{-1}(q))$; this stems from the fact that the irreducible complex characters of $Q$ span the ring of class functions on $Q$.  Note in particular that the computation $\chi(q)\neq\chi(f^{-1}(q))$ is independent of the choice of representative of $f$ in $\Aut(\pi_1(S))$, because characters are invariant under conjugation.  Here, we precompose the character $\chi$ with $f^{-1}$ instead of $f$ in order to get a left action.  In the decomposition of $H_1(\yt{S},\bC)$ into a direct sum of irreducible representations of $Q$, the summands corresponding to $\chi$ are mapped by $f$ to the summands corresponding to $\chi\circ f^{-1}$, and these are distinct because their corresponding characters evaluate $q$ differently.
\end{proof}

\begin{lemma}\label{l:shortfill}
There exists an $L$ such that the (finite) set $F$ of elements of $\pi_1(S)$ of length at most $L$ represents a collection of closed curves on $S$ which fills $S$.
\end{lemma}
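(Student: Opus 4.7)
The plan is to reduce the lemma to exhibiting short $\pi_1(S)$-representatives of a specific, already-known finite filling collection of simple closed curves on $S$. The key observation is that enlarging a collection of closed curves preserves the filling property, so it suffices to find $L$ for which the ball $F$ of radius $L$ in $\pi_1(S)$ contains at least one representative of each curve in some filling set.

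First I would invoke the Artin--Dehn--Humphries--Lickorish curves $\Gamma = \{\gamma_1,\ldots,\gamma_k\}$ described in Section \ref{s:DHL}. These are a finite collection of essential, non-peripheral simple closed curves on $S$, chosen so that their complement consists of disks and once-punctured disks; in particular $\Gamma$ fills $S$. Fixing the basepoint for $\pi_1(S)$ already chosen in this section, and picking for each $\gamma_i$ an auxiliary arc from the basepoint to $\gamma_i$, produces a based loop $w_i \in \pi_1(S)$ whose free homotopy class is $\gamma_i$. Since each $w_i$ is a single element of $\pi_1(S)$, it has a finite word length $\ell_Y(w_i)$ in the fixed generating set $Y$. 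Setting
\[
L := \max_{1 \leq i \leq k} \ell_Y(w_i),
\]
we obtain a finite integer depending only on $S$, $Y$, and the auxiliary choices of basepoint arcs.

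By construction, each $w_i$ lies in the ball $F$ of radius $L$ in $\pi_1(S)$, so the collection of closed curves represented by the elements of $F$ includes the free homotopy classes of $\gamma_1,\ldots,\gamma_k$. Because $\Gamma$ fills $S$, any essential non-peripheral curve $\alpha$ has positive geometric intersection with some $\gamma_i$; the same $\gamma_i$ witnesses that $\alpha$ has positive intersection with the larger collection represented by $F$. Hence $F$ represents a filling collection of closed curves on $S$.

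There is essentially no obstacle here: the content is just the existence of the Dehn--Humphries--Lickorish filling set, together with the obvious fact that any based loop has a finite word length. The only minor point to handle carefully is the distinction between based elements of $\pi_1(S,p)$ and free homotopy classes of closed curves on $S$. Since any two based representatives of the same free homotopy class are conjugate, the specific value of $L$ depends on the choice of basepoint arcs, but this affects neither the existence of $L$ nor the conclusion.
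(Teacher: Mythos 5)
Your proof is correct and takes essentially the same approach as the paper, which simply observes that the claim is obvious once one fixes any finite filling set of curves and takes $L$ to be the maximum word length of representatives in $\pi_1(S)$. Your elaboration — choosing the Artin--Dehn--Humphries--Lickorish curves, picking based representatives via auxiliary arcs, and noting that enlarging a filling collection preserves the filling property — is exactly the intended argument spelled out in detail.
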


\begin{proof}  This is obvious after fixing any finite, filling set of curves on $S$.\end{proof}

\begin{proof}[Proof of Theorem \ref{t:covers}]
Let $f$ be given of length $n$.  We first check if $f$ is nontrivial and whether or not it has finite order.  There is a $k$ which is independent of $f$ such that $f^k$ is trivial in $\Mod(S)$ if and only if $f$ has finite order.  We have that $f^k$ is trivial if and only if $f^k$ preserves the conjugacy classes of each element in $F$ from Lemma \ref{l:shortfill}.  We lift $f$ to $\yt{f}$ in $\Mod^1(S)$ and apply both $\yt{f}$ and $\yt{f}^k$ to each element of $F$.  This process gives us a finite set of words of $\pi_1(S)$ with lengths at most $\lambda^{nk}\cdot L$, where $\lambda$ is the expansion factor determined by our generating sets for $\Mod(S)$ and $\pi_1(S)$.

By Lemma \ref{l:conjsep}, there is a finite characteristic quotient $Q$ of $\pi_1(S)$ such that any two non-conjugate elements of $\pi_1(S)$ of length at most $\lambda^{nk}\cdot L$ have non-conjugate images in $Q$.  Fix $g\in\pi_1(S)$ such that $g$ and $f(g)$  both have length at most $\lambda^{nk}\cdot L$, and write $q$ and $f(q)$ for the images of $g$ and $f(g)$ respectively in $Q$.  Let $\yt{S}$ be the connected cover of $S$ determined by the quotient $Q$; the degree of $\yt{S}$ is bounded above by the $N$ from Lemma \ref{l:conjsep}.  

By Lemma \ref{l:characters}, we may check the values of the various characters of $Q$ on the images of the elements of $F$, and determine whether or not $f$ is the identity mapping class.  In particular, we can determine whether or not $f$ is the identity mapping class by examining its action on $H_1(\yt{S},\bC)$.  A similar observation concerning the triviality of $f^k$ shows that $\yt{S}$ is a classifying cover for finite-order mapping classes on $S$.

When $f^k$ is nontrivial, it remains to determine whether $f^k$ (and hence $f$) is reducible or pseudo-Anosov.  We may assume $k$ is such that $f^k$ is reducible if and only if it fixes a curve.  Using Theorem \ref{t:estimate}, we enumerate a list of pairs of conjugacy classes
\[\{[g_1^{\pm1}],\ldots,[g_s^{\pm1}]\}\]
such that if $f^k$ is reducible then $f^k$ stabilizes at least one of these pairs of conjugacy classes. 
If $\lambda$ is the expansion factor for our generating set for $\Mod(S)$ then there are representatives for the conjugacy classes \[\{f^k([g_1^{\pm1}]),\ldots,f^k([g_s^{\pm1}])\}\] whose lengths are at most $(C\lambda)^{nk}$.

The rest of the proof is identical to the finite order case.  By Lemma \ref{l:conjsep}, there is a finite characteristic quotient $Q$ of $\pi_1(S)$ such that any two non-conjugate elements of $\pi_1(S)$ of length at most $(C\lambda)^{nk}$ have non-conjugate images in $Q$.  As before, we take $\yt{S}$ to be the connected cover of $S$ determined by the quotient $Q$.  The mapping class $f$ is pseudo-Anosov if and only if the characters induced on $H_1(\yt{S},\bC)$ by the images of $[g]$ and $f([g])$ in $Q$ are not equal for each $g\in\pi_1(S)$ of length at most $C^{nk}$.
\end{proof}

\section{Effective conjugacy separability}\label{s:effective}
In this section, we will give a geometric proof that free groups and surface groups are effectively conjugacy separable.  That is to say, given any non-conjugate pair of elements $g,h\in G$ where $G$ is finitely generated by a set $\mathcal{S}$, there is a finite quotient $Q_{g,h}$ of $G$ in which $g$ and $h$ are not conjugate, and such that the order of $Q_{g,h}$ is controlled by a computable function of $\ell_\mathcal{S}(g)+\ell_\mathcal{S}(h)$, where $\ell_\mathcal{S}(\cdot)$ denotes word length with respect to $\mathcal{S}$.  We will handle the free group and closed surface group cases separately, though the arguments are essentially the same.

\subsection{From quotients to characteristic quotients}\label{ss:char}
We first gather some general facts about $p$--groups and characteristic quotients.  The first is a trivial observation.

\begin{lemma}
Let $g,h\in G$, let $G\to Q$ be a quotient of $G$ in which the images of $g$ and $h$ are not conjugate, and let $G\to\overline{G}$ be a quotient of $G$ which factors the quotient map $G\to Q$.  Then the images of $g$ and $h$ are not conjugate in $\overline{G}$.
\end{lemma}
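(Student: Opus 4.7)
The statement asserts that non-conjugacy is preserved by passing from a quotient to an intermediate quotient that still surjects onto it. Since the paper itself flags this as a trivial observation, I expect the proof to be a single line of diagram-chasing, with no hidden difficulty.

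My plan is to argue by contraposition. Suppose, toward a contradiction, that the images $\bar{g}$ and $\bar{h}$ of $g$ and $h$ in $\overline{G}$ are conjugate: that is, there exists $\bar{x} \in \overline{G}$ with $\bar{g} = \bar{x}\,\bar{h}\,\bar{x}^{-1}$. By hypothesis the quotient map $G \to Q$ factors as $G \to \overline{G} \xrightarrow{\pi} Q$, so $\pi$ is a group homomorphism. Applying $\pi$ to the conjugacy relation in $\overline{G}$ yields $\pi(\bar{g}) = \pi(\bar{x})\,\pi(\bar{h})\,\pi(\bar{x})^{-1}$, which is precisely a conjugacy between the images of $g$ and $h$ in $Q$. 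This contradicts the standing hypothesis that those images are non-conjugate in $Q$.

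The only thing to verify carefully is that the ``images in $Q$'' obtained from the two-stage composition $G \to \overline{G} \to Q$ agree with the ``images in $Q$'' referenced in the hypothesis, but that is exactly what it means to say $G \to \overline{G}$ factors the quotient map $G \to Q$. There is no genuine obstacle here; the lemma is a formal consequence of the functoriality of conjugation under group homomorphisms, and its purpose in the paper is presumably to let later arguments replace an arbitrary non-conjugating quotient by a characteristic one (obtained, for instance, by intersecting conjugates of the kernel) without losing the separation property.
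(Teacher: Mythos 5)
Your argument is correct and is exactly the one-line diagram chase the paper has in mind; indeed the paper offers no proof at all, dismissing the lemma as ``a trivial observation.'' Your contrapositive step (a homomorphism sends conjugate elements to conjugate elements) is the only content, and you have identified it precisely.
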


Let $p$ be a fixed prime.

\begin{lemma}
Let $G$ be a group and let $H,K$ be a normal subgroups of $G$ of $p$--power index.  Then $H\cap K$ has $p$--power index in $G$.
\end{lemma}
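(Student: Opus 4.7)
The plan is to exhibit $G/(H\cap K)$ as a subgroup of the $p$-group $G/H \times G/K$, from which it follows immediately that $|G/(H\cap K)|$ is a power of $p$.

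First I would define the diagonal homomorphism
\[
\varphi : G \longrightarrow G/H \times G/K, \qquad \varphi(g) = (gH,\, gK).
\]
This is well-defined and a homomorphism because both $H$ and $K$ are normal in $G$, so the quotients $G/H$ and $G/K$ are groups in the usual sense. A routine check shows that $\ker\varphi = H\cap K$; indeed, $\varphi(g)$ is the identity exactly when $g\in H$ and $g\in K$.

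By the first isomorphism theorem, $G/(H\cap K)$ embeds as a subgroup of $G/H \times G/K$. By hypothesis, $|G/H|$ and $|G/K|$ are powers of $p$, so the direct product $G/H\times G/K$ has order a power of $p$. Since every subgroup of a finite $p$-group is itself a $p$-group (its order divides the order of the ambient group), $G/(H\cap K)$ has $p$-power order, which is precisely the statement that $H\cap K$ has $p$-power index in $G$.

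The argument is entirely routine; there is no real obstacle, and the only point worth remarking is that one does not need to invoke anything beyond normality of $H$ and $K$ and the first isomorphism theorem. In particular, one automatically gets the quantitative bound $[G : H\cap K] \le [G:H]\cdot[G:K]$, which may be useful for later effectivity estimates.
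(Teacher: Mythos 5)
Your proof is correct. The paper disposes of this lemma in one line by invoking the Second Isomorphism Theorem: one writes $[G : H\cap K] = [G:H]\,[H : H\cap K]$ and observes that $H/(H\cap K) \cong HK/K$ is a subgroup of $G/K$, so $[H:H\cap K]$ divides $[G:K]$; both factors are $p$-powers, hence so is the product. You instead use the First Isomorphism Theorem applied to the diagonal map $g \mapsto (gH, gK)$, whose kernel is $H\cap K$, to embed $G/(H\cap K)$ into the $p$-group $G/H \times G/K$. The two arguments are of comparable length and both yield the same quantitative bound $[G:H\cap K] \le [G:H]\,[G:K]$, which is what matters for the effectivity estimates later in the section; the diagonal-embedding version is perhaps slightly more self-contained in that it does not require knowing $HK$ is a subgroup, while the paper's version more directly exposes the divisibility $[H:H\cap K] \mid [G:K]$. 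Either is fine.
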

\begin{proof}
This is an easy consequence of the Second Isomorphism Theorem for groups.
\end{proof}

We remark that in the previous lemma, the normality of $H$ and $K$ is essential.

\begin{cor}
Let $G$ be an $m$--generated group and let $H<G$ be a normal subgroup of index $q=p^n$.  Then there exists a subgroup $K<H<G$ such that $K$ is characteristic in $G$ and has index at most $q^{q^m}$.
\end{cor}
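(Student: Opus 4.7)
The natural candidate for $K$ is the \emph{characteristic core} of $H$, namely
\[
K \;=\; \bigcap_{\varphi \in \Aut(G)} \varphi(H).
\]
Since this intersection is invariant under every automorphism of $G$, it is characteristic in $G$, and it sits inside $H$ because $H$ itself appears in the intersection (taking $\varphi = \mathrm{id}$). So the only real task is to bound the index $[G:K]$.

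The first step is to observe that the collection $\{\varphi(H) : \varphi \in \Aut(G)\}$ is not just a set of normal subgroups of index $q$, but a finite one with an explicit bound on its cardinality. Indeed, every $\varphi(H)$ arises as the kernel of a surjection $G \twoheadrightarrow G/\varphi(H)$, and $G/\varphi(H)$ has order $q = p^n$. Since $G$ is generated by $m$ elements, any homomorphism from $G$ to a group of order $q$ is determined by the images of those $m$ generators, so there are at most $q^m$ such homomorphisms, and hence at most $q^m$ distinct kernels among the $\varphi(H)$. Write these kernels as $H_1, \ldots, H_r$ with $r \leq q^m$; then $K = H_1 \cap \cdots \cap H_r$.

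The second step is to iterate the previous lemma (on intersections of $p$-power-index normal subgroups), coupled with the elementary bound $[G : H_1 \cap H_2] \leq [G:H_1]\,[G:H_2]$ coming from the injection $G/(H_1 \cap H_2) \hookrightarrow G/H_1 \times G/H_2$. Each $H_i$ has index $q$, so inductively
\[
[G:K] \;=\; [G : H_1 \cap \cdots \cap H_r] \;\leq\; q^r \;\leq\; q^{q^m},
\]
which is exactly the desired bound. The lemma ensures, in addition, that this index is itself a power of $p$, though only the numerical bound is needed here.

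I do not anticipate a real obstacle: the argument is entirely formal once one has the finiteness of homomorphisms from an $m$-generated group into a fixed finite group and the multiplicativity-of-index estimate for intersections. The only thing to be mildly careful about is that $K$ actually lies in $H$ (true by construction) and that $K$ is characteristic (also automatic from the definition), so no additional normal-closure maneuvers are needed.
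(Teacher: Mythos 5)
Your proof is correct and takes essentially the same approach as the paper: both rest on the observation that an $m$-generated group admits at most $q^m$ homomorphisms to a group of order $q$, and both bound the index of an intersection of at most $q^m$ index-$q$ normal subgroups by $q^{q^m}$. The only cosmetic difference is that you take $K$ to be the characteristic core $\bigcap_{\varphi\in\Aut(G)}\varphi(H)$ (which makes $K\le H$ and characteristicity immediate, at the cost of the extra remark that each $\varphi(H)$ is a kernel of a map to $Q$), whereas the paper intersects the kernels of all homomorphisms $G\to Q$ (which makes the $q^m$ count immediate, at the cost of the extra remark that this intersection is characteristic); these yield possibly different $K$'s but the same bound.
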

\begin{proof}
Write $Q=G/H$, which is a finite group of order $q$.  There are at most $q^m$ homomorphisms from $G$ to $Q$, and the intersection $K$ of the kernels of these homomorphisms has index at most $q^{q^m}$.
\end{proof}

\subsection{Free groups}
The argument we give here is essentially a geometric version of the argument given in \cite{LS}. 
\begin{prop}\label{freeconjsep}
Finitely generated free groups are effectively conjugacy separable.
\end{prop}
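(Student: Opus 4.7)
The plan is to argue geometrically, following the hint in the paper that this is a geometric version of the Lyndon--Schupp proof. I identify $F = F(\mathcal{S})$ with $\pi_1(R, v)$, where $R$ is the rose with one petal for each element of $\mathcal{S}$, so that conjugacy classes in $F$ correspond to cyclically reduced words in $\mathcal{S} \cup \mathcal{S}^{-1}$ up to cyclic permutation. First I would cyclically reduce $g$ and $h$ to representatives $g^{\ast}$ and $h^{\ast}$ of lengths at most $\ell_\mathcal{S}(g)$ and $\ell_\mathcal{S}(h)$; by hypothesis $g^{\ast}$ is not a cyclic permutation of $h^{\ast}$.

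Next I would construct a finite cover $p \colon \tilde R \to R$ whose degree $d$ is a computable function of $\ell_\mathcal{S}(g)+\ell_\mathcal{S}(h)$ and $|\mathcal{S}|$, and whose monodromy homomorphism $\rho \colon F \to S_d$ separates the conjugacy classes of $g$ and $h$. The starting point is the labeled oriented cycle $\Gamma_{g^{\ast}}$ of combinatorial length $|g^{\ast}|$ spelling out $g^{\ast}$; because $g^{\ast}$ is cyclically reduced, $\Gamma_{g^{\ast}}$ is a folded immersion into $R$ in the sense of Stallings. Marshall Hall's theorem allows every such finite folded immersion to be extended to a finite topological cover of $R$, and by explicitly pairing up the unmatched half-edges at each label one can arrange $d \leq c(|\mathcal{S}|)\cdot|g^{\ast}|$ for some simple constant $c(|\mathcal{S}|)$. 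By construction, a suitable conjugate of $g$ lifts to a closed loop of length exactly $|g^{\ast}|$ in $\tilde R$, so the permutation $\rho(g)$ contains an $|g^{\ast}|$-cycle.

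Finally I would take $Q_{g,h}$ to be $F$ modulo the normal core of $p_\ast \pi_1(\tilde R)$; this is isomorphic to the image of $\rho$ and hence has order at most $d!$, an explicitly computable function of $\ell_\mathcal{S}(g)+\ell_\mathcal{S}(h)$. To see that $g$ and $h$ have non-conjugate images in $Q_{g,h}$, it suffices to see that $\rho(g)$ and $\rho(h)$ have distinct cycle types in $S_d$. The case $|g^{\ast}| \neq |h^{\ast}|$ is essentially immediate from the construction. The main obstacle, and the content of the proof, is the equal-length case: when $|g^{\ast}|=|h^{\ast}|$ but the two words differ as cyclic words, one must carry out the Marshall Hall completion with enough care that, reading $h^{\ast}$ starting at any vertex of $\Gamma_{g^{\ast}}$, the path is forced off the cycle at the first letter of disagreement with the corresponding rotation of $g^{\ast}$ and then routed through freshly added ``tail'' vertices from which it cannot close up in $|h^{\ast}|$ further steps. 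This guarantees that $\rho(h)$ carries no $|g^{\ast}|$-cycle, distinguishing the cycle types and completing the separation. Bookkeeping the padding adds only a bounded overhead to $d$, so the effective bound on $|Q_{g,h}|$ is preserved.
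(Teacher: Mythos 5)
Your approach is in the Stallings--graph / Marshall Hall spirit, and proofs of this flavor do exist in the literature; but as written the sketch has a conceptual error and leaves the hard step essentially unargued.

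First, the cycle-type claim is wrong.  If $\Gamma_{g^{\ast}}$ is the $n$-cycle (with $n = |g^{\ast}|$) spelling $g^{\ast}$ and you complete it to a cover $\tilde R \to R$, what you get is $\langle g^{\ast}\rangle \subset p_{\ast}\pi_1(\tilde R, v_0)$, i.e.\ $g^{\ast}$ lifts to a closed loop at the vertex $v_0$ where the cycle closes up.  That means $\rho(g^{\ast})$ \emph{fixes} $v_0$: $\rho(g)$ has a $1$-cycle, not an $n$-cycle.  Reading $g^{\ast}$ from any other vertex $v_i$ of $\Gamma_{g^{\ast}}$ generally leaves the cycle immediately (the outgoing edge at $v_i$ carries the $i$-th letter of $g^{\ast}$, not the first), so those vertices do not form an $n$-orbit.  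What you would actually want to show is that $\rho(h)$ has \emph{no} fixed point, i.e.\ no conjugate of $h$ has a closed lift anywhere in $\tilde R$.

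Second, and this is the real gap, the equal-length case is waved away.  Routing $h^{\ast}$ off the cycle into ``tail'' vertices controls what happens when you read $h^{\ast}$ starting from a vertex of $\Gamma_{g^{\ast}}$, but you also need to prevent $h^{\ast}$ (or any cyclic rotation of it) from closing up at the tail vertices themselves, and, worse, at the vertices introduced by the Marshall Hall completion, over which you have no control once you ``explicitly pair up unmatched half-edges.''  A random completion can and typically will create accidental closed $h$-loops.  Making this work requires a genuine construction (for instance, building a sufficiently deep ``tree-like'' buffer before completing, or doing the completion with an explicit colouring so you can track all closed loops of length $\leq |h^{\ast}|$), and neither the construction nor the bound on the resulting degree is supplied.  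Saying ``bookkeeping the padding adds only a bounded overhead'' is precisely the content that is missing.

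For contrast, the paper avoids cycle-type analysis entirely: it inducts on the sum of combinatorial lengths, passing to a $\mathbb{Z}/p\mathbb{Z}$-cover in which both loops lift, collapsing a maximal tree containing a non-loop edge traversed by one of them, and thereby \emph{strictly} decreasing total length.  This yields $p$-group quotients at every stage, which then feed directly into the characteristic-quotient machinery of Subsection~\ref{ss:char}.  Your quotient, the image of $\rho$ in $S_d$, is not a $p$-group, so even if the separation step were fixed you would need a slightly different (though still routine) argument to pass to a characteristic finite-index subgroup; that part is repairable, but the separation step as sketched is not yet a proof.
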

\begin{proof}
Let $F$ be a free group on a finite set $\mathcal{S}$.  We may assume $\mathcal{S}$ has at least two elements, since it is easy to see that, when $F$ is cyclic, we may use $Q_{g,h}$ of order \[p^{\max\{\ell_\mathcal{S}(g), \ell_\mathcal{S}(h)\}}.\]  Let $X$ be a wedge of circles with vertex $x$, whose edges are labelled by the elements of $\mathcal{S}$.  We choose orientations on the edges arbitrarily and choose a midpoint of each edge of $X$.  If $\gamma$ is an oriented based loop in $X$, we can record the sequence of midpoints traversed by $\gamma$ while keeping track of orientations.  This gives us an identification of $\pi_1(X,x)$ with $F$.  If $\gamma$ does not backtrack, its  \emph{combinatorial length} is defined to be the absolute number of midpoints traversed by $\gamma$.  The combinatorial length of $\gamma$ coincides with $\ell_\mathcal{S}(g)$ for $g$ the element of $F$ represented by $\gamma$.

We will use induction on the combinatorial length $N$ to take two elements $w,v\in F$ with $\ell_\mathcal{S}(w)+\ell_\mathcal{S}(v)=N$, and produce a $p$--group quotient of $F$ which has order controlled by a computable function of $N$, and in which the images of $w$ and $v$ are conjugate if and only if they are conjugate in $F$.

So, fix a prime $p>2$, and let $w,v\in F$ be reduced and cyclically reduced words, represented by non-backtracking unbased loops in $X$ with $\ell_\mathcal{S}(w)+\ell_\mathcal{S}(v)=N$.  Throughout the proof, we allow for the case that one of $v$ or $w$ is trivial, so that the corresponding loop is just a point.

If $N\leq 2$ then either only one of $v$ and $w$ is nontrivial, or $\ell_\mathcal{S}(v)=\ell_\mathcal{S}(w)=1$, in which case $v$ and $w$ are conjugate if and only if they are equal, which happens if and only if their images in $H_1(F,\bZ/p\bZ)$ are equal (since $p>2$).  If only one of $v$ and $w$ is nontrivial, say $v$, then the image of $v$ in $H_1(F,\bZ/p\bZ)$ is nontrivial whereas the image of $w$ is trivial.  Indeed, $v$ is a reduced word in $F$ of combinatorial length exactly two and is therefore a product of at most two generators of $F$ which are not inverses of each other.  Again, since $p\neq 2$, we have that the image of $v$ in $H_1(F,\bZ/p\bZ)$ must be nontrivial.

If loops representing $v$ and $w$ are concentrated in a single wedge summand of $X$, so that these loops only intersect one midpoint of one edge of $X$, then $v$ and $w$ generate a cyclic subgroup of $F$.  In this case, we may look at the images of $v$ and $w$ in $H_1(F,\bZ/p^N\bZ)$.  It is clear that $v$ and $w$ will have distinct images in this quotient if and only if $v\neq w$.

For the general inductive step, we can make several simplifying assumptions.  The first is that the span of the images of $v$ and $w$ in $H_1(F,\bZ/p\bZ)$ has dimension at most one, after replacing $v$ or $w$ by an appropriate conjugate.  Indeed, otherwise $v$ and $w$ are not conjugate since their homology classes are linearly independent after applying any conjugacy.  Secondly, we may assume that $v$ and $w$ \emph{fill} $X$, in the sense that they are not simultaneously homotopic into a proper subcomplex of $X$, because in that case we may restrict our attention to the proper subgraph of $X$ filled by $v$ and $w$.

We have that the loops representing $v$ and $w$ fill all of $X$.  Note that the subgroup generated by $v$ and $w$ may still be cyclic even though all of $X$ is filled.  By the first assumption, there is a homomorphism $\phi:\pi_1(X)\to\bZ/p\bZ$ for which $v$ and $w$ are both in the kernel.  Thus, loops representing $v$ and $w$ lift to the cover $Y\to X$ determined by $\phi$.  We pull back that edge-midpoint structure on $X$ to $Y$.  Choose lifts of the two loops representing $v$ and $w$, which we call $v_Y$ and $w_Y$ respectively.  Different choices of $v_Y$ and $w_Y$ differ by the action of the deck group, so choosing a particular lift does not affect conjugacy considerations.  Therefore, we choose $v_Y$ and $w_Y$ so that they span a one--dimensional subspace of $H_1(Y,\bZ/p\bZ)$, if possible.

Observe that the combinatorial lengths of $v_Y$ and $w_Y$ are equal to those of $v$ and $w$ respectively, with respect to the pulled--back edge-midpoint structure on $Y$.  By the second assumption, namely the assumption that $v$ and $w$ fill $X$, there is an edge of $Y$ which is not a closed loop and which is traversed by at least one of $v$ and $w$.  This edge can be extended to a maximal tree $T$ in $Y$.  A free basis for $\pi_1(Y)$ is identified with $Y\setminus T$.

Now consider the images $\overline{v_Y}$ and $\overline{w_Y}$ of $v_Y$ and $w_Y$ in the wedge of circles $Y/T$.  The midpoints of edges in $Y\setminus T$ give rise to new combinatorial lengths for $\overline{v_Y}$ and $\overline{w_Y}$ as loops in $Y/T$.  By the choice of $T$, we have that the sum of the combinatorial lengths of $\overline{v_Y}$ and $\overline{w_Y}$ is at most $N-1$.  The conclusion follows by induction on $N$.
\end{proof}

We remark that the exact rank of $F$ or of $\pi_1(Y/T)$ in the proof above is irrelevant for the induction.  Furthermore, we remark that if the conjugacy classes of elements $v$ and $w$ of $F$ are separated in a $p$--group quotient $Q$ of $F$ with kernel $K$, then by Subsection \ref{ss:char} we may produce a characteristic $p$--group quotient $\yt{Q}$ of $F$ which admits $Q$ as a further quotient, and whose size is controlled by the order of $Q$.

\subsection{Closed surface groups}
We will restrict our attention to orientable hyperbolic-type surfaces.
\begin{prop}\label{surfaceconjsep}
Let $S$ be a closed surface.  Then $\pi_1(S)$ is effectively conjugacy separable.
\end{prop}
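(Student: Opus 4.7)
The plan is to run the same geometric induction as in the proof of Proposition \ref{freeconjsep}, with the wedge of circles replaced by the standard single-vertex CW structure on $S$ coming from a $4g$-gon identification: $2g$ oriented edges $\mathcal{S}=\{a_1,b_1,\ldots,a_g,b_g\}$ and a single $2$-cell attached via the relator $R=\prod_i[a_i,b_i]$. Place a midpoint on each edge and define combinatorial length of a cyclically reduced loop as the number of midpoints crossed; this coincides with word length with respect to $\mathcal{S}$. Fix a prime $p>2$ and induct on $N=\ell_{\mathcal{S}}(v)+\ell_{\mathcal{S}}(w)$, proving the statement simultaneously for all closed orientable hyperbolic-type surfaces so that passing to a higher-genus cover remains legitimate.

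The base case of small $N$ is handled exactly as in the free group proof: the images of cyclically reduced loops of small combinatorial length in $H_1(S,\bZ/p)\cong(\bZ/p)^{2g}$ distinguish them unless they are equal as words, with $p>2$ handling the orientation sign. For the inductive step, if $[v]$ and $[w]$ differ in $H_1(S,\bZ/p)$ a quotient of order $p^{2g}$ separates them. Otherwise both $v$ and $w$ lie in the kernel of some surjection $\phi:\pi_1(S)\twoheadrightarrow\bZ/p$; passing to the corresponding connected $p$-fold cover $Y\to S$ and lifting to $v_Y,w_Y\in\pi_1(Y)$, non-conjugacy in $\pi_1(S)$ translates to the condition that $v_Y$ is not conjugate in $\pi_1(Y)$ to any of the $p$ deck translates $\tau\cdot w_Y$. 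A finite characteristic quotient of $\pi_1(Y)$ separating these finitely many pairs then lifts to a finite characteristic quotient of $\pi_1(S)$ with controlled index via the intersection-of-kernels procedure of Subsection \ref{ss:char}.

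Now pull back the CW structure to $Y$ and pull the midpoints along with it. Provided $v_Y$ and the translates $\tau\cdot w_Y$ together fill $Y$, there is an edge of the 1-skeleton missed by all of them; include it in a maximal spanning tree $T$ of the 1-skeleton and collapse $T$. The quotient $Y/T$ presents $\pi_1(Y)$ as a one-relator group on $\beta_1(Y)$ generators in which the combinatorial lengths of $v_Y$ and of each $\tau\cdot w_Y$ are strictly smaller than before, driving the total length strictly below $N$. If instead the loops fail to fill $Y$, they lie in a proper subsurface with boundary, whose fundamental group is free, and Proposition \ref{freeconjsep} applies directly with effective bounds in the lifted lengths $\lambda^{?}\cdot N$.

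The main obstacle — and the feature that distinguishes this argument from the free group case — is that after collapsing $T$, the group $\pi_1(Y)$ remains a one-relator surface group rather than a free group, so one cannot finish by a single appeal to Proposition \ref{freeconjsep}. We handle this by running the induction over the class of all closed hyperbolic surfaces at once, so that the shortened configuration in $\pi_1(Y)$ falls within the inductive hypothesis applied to the higher-genus surface $Y$. The remaining bookkeeping — tracking that every intermediate quotient is a $p$-group and that the characteristic completions of Subsection \ref{ss:char} multiply indices only in a controlled way — yields a characteristic quotient of $\pi_1(S)$ whose order is a computable function of $\ell_{\mathcal{S}}(v)+\ell_{\mathcal{S}}(w)$, as required.
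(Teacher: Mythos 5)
Your general strategy --- induct on combinatorial length over the class of all closed hyperbolic surfaces, pass to $p$-covers where the curves lift, and use Subsection \ref{ss:char} to upgrade to characteristic quotients --- matches the paper's approach. But there are two concrete gaps that prevent the argument from closing.

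First, the sentence ``Provided $v_Y$ and the translates $\tau\cdot w_Y$ together fill $Y$, there is an edge of the 1-skeleton missed by all of them'' is self-contradictory, and even after correcting it to mirror the free-group case (some lift \emph{traverses} a non-loop edge of $Y$), collapsing a spanning tree of the pulled-back 1-skeleton does not land you back in the inductive setting. After collapsing you have a one-vertex CW structure on $Y$ with $p$ two-cells, not the standard $4g'$-gon structure, so the ``combinatorial length'' in that complex is no longer word length with respect to a surface-group presentation and the inductive hypothesis doesn't directly apply. The paper sidesteps this by measuring length geometrically (transverse intersections of geodesics with a piecewise-geodesic 1-complex $\mathcal{X}$), and instead of collapsing a tree of the 1-skeleton it \emph{deletes} edges of $\mathcal{X}$ until there is a single 2-cell --- dual to your operation, and directly reducing intersection counts. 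Crucially, the paper forces a \emph{strict} decrease by a targeted choice of $\phi$: once the filled subsurface $S_0$ is not an annulus and $\gamma_1,\gamma_2$ span at most one dimension in homology, one can pick $\phi$ killing $g_1,g_2$ but \emph{not} killing some homologically nontrivial simple closed curve $\alpha\subset S_0$. Then $S_0$ cannot lift, which is exactly what is needed to certify that some intersection with $\mathcal{X}$ disappears in the cover. Your proposal has no analogous mechanism guaranteeing strict descent.

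Second, you omit the annulus case, where $\gamma_1$ and $\gamma_2$ fill only an annulus (i.e.\ $g_1,g_2$ are powers of a common primitive element). In that case combinatorial length cannot be made to decrease by passing to covers --- the underlying primitive geodesic persists with the same length --- so the induction genuinely stalls. The paper handles it separately: if the annulus is nonseparating, examine $H_1(S,\bZ/p^N\bZ)$; if it is separating, first pass to the mod-$p$ homology cover to make it nonseparating. Your base case only treats very short words, so this case escapes both your base case and your inductive step.
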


Conjugacy separability of surface groups has been studied by various authors.  H. Wilton~\cite{wilton} gives an effective proof which relies on the work of Niblo~\cite{niblo}.

\begin{proof}[Proof of Proposition \ref{surfaceconjsep}]
Let $S$ be a closed surface of genus at least two.  We fix a hyperbolic structure on $S$ such that a fundamental domain $F$ for the action of $\pi_1(S)$ on $\bH^2$ is given by a convex right-angled polygon.  By taking finite connected unions of $F$ along its faces, we can build fundamental domains for finite (possibly non-regular) covers of $S$.

Write $p:\bH^2\to S$ for the universal covering map.  We assume that the boundary of the polygon $F$ descends to a set of non-separating geodesic curves on $S$ with linearly independent homology classes.  Call this collection of non-separating curves $\mathcal{X}$.  We tile $\bH^2$ by $\pi_1(S)$--translates of $F$, whose edges fit together into the collection $\mW=p^{-1}(\mathcal{X})$ of bi-infinite geodesics of $\bH^2$.  We orient the curves of $\mathcal{X}$ arbitrarily and pull back these orientations to $\mW$.

Let $\gamma$ be an oriented geodesic representative of a primitive free homotopy class of closed curves in $\pi_1(S)$, and write $\ell_{hyp}(\gamma)$ for its hyperbolic length.  The preimage $\yt{\gamma}=p^{-1}(\gamma)$ in $\bH^2$ is a union of geodesics, which generally will not be pairwise disjoint.  Assume for now that $\gamma$ is not a curve of $\mathcal{X}$.  Choose a point $x$ in the interior of $\yt{\gamma}$ which is disjoint from $\mW$ and which sits on exactly one bi-infinite geodesic $\delta\subset\yt{\gamma}$.  Following $\delta$ in the positive direction for a distance of $\ell_{hyp}(\gamma)$ and counting (without multiplicity) the number of intersection points with $\mW$ gives a well-defined notion of length which we call the \emph{combinatorial length} of $\gamma$, and denote by $\ell_c(\gamma)$.  It is easy to see that this number is independent of the choice of $x$.  The combinatorial length of $\gamma$ is also the number of interiors of distinct fundamental domains (from our tiling by translates of $F$) which a subsegment of $\yt{\gamma}$ of length $\ell_{hyp}(\gamma)$ intersects, minus one.  Observe that the combinatorial length of $\gamma$ can be computed on $S$ be counting the total number of intersections with $\mathcal{X}$.  In the case $\gamma \in \mathcal{X}$, we make the convention that $\ell_c(\gamma) = 1$.

If $g=h^n\in\pi_1(S)$ is the $n^{th}$ power of a primitive free homotopy class then we think of the corresponding geodesic $\gamma_g$ representing $g$ to be the same as the geodesic $\gamma_h$ representing $h$, except that $g$ traverses $\gamma_h$ a total of $n$ times.  We call the geodesic $\gamma_g$ the \emph{underlying geodesic} for $g$, and we define the combinatorial length of $g$ by $\ell_c(g)=n\cdot\ell_c(\gamma_h)$.  Because $\ell_c(g)$ can be understood as the minimal number of times a curve representing $g$ intersects $\mathcal{X}$, we have $\ell_c(g) \leq K\ell_\mathcal{S}(g)$, where $\mathcal{S}$ is a fixed generating set for $\pi_1(S)$ and $K=\max_{s\in\mathcal{S}}\{\ell_c(s)\}$.
By construction, a nontrivial free homotopy class in $\pi_1(S)$ has nonzero combinatorial length.

For $S'\to S$ a finite cover, we may choose a fundamental domain $F'$ for $S'$ such that $F'$ is a connected union of copies of $F$, and translates of $F'$ tile $\bH^2$.  Specifically, consider the cell decomposition of $S'$ given by taking the pre-image of $\mathcal{X}$.  We may delete edges of adjoining faces until the cell decomposition has a single $2$-cell, whose pre-images in $\bH^2$ we use to define the boundaries of the copies of $F'$.  Let $\mathcal{X}'$ be the $1$-skeleton on $S'$ of this new cell decomposition.  If $\gamma \notin \mathcal{X}$ is a closed geodesic on $S$ which lifts to $\yt{\gamma}$ on $S'$, we define the combinatorial length of $\yt{\gamma}$ to be the number of tiles of $F'$ in $\bH^2$ traversed by a subsegment of $\yt{\gamma}$ of length $\ell_{hyp}(\gamma)$ which starts in the interior of a tile, minus one.  We make the convention that, if $\gamma \in \mathcal{X}$ lifts to $S'$, the combinatorial length of its lift is again one.  By construction, the combinatorial length of a geodesic on $S'$ is bounded above by the combinatorial length of its quotient on $S$.  It is also precisely the number of points of transverse intersection of the geodesic representative $\gamma$ with $\mathcal{X}'$, a piecewise-geodesic graph on $S'$.  If \[S^{i} \to S^{i-1} \to \cdots \to S\] is a tower of covers, we may construct the corresponding fundamental domains compatibly so that $F^i$ is a concatenation of copies of $F^{i-1}$, and the preimage of $\mathcal{X}^i$ in $\bH^2$ is a proper subset of the preimage of $\mathcal{X}^{i-1}$.

As before, for $g=h^n\in\pi_1(S')\subset\pi_1(S)$ the $n^{th}$ power of a primitive free homotopy class, we make a new definition of combinatorial length in $\pi_1(S')$ by considering the underlying geodesic $\yt{\gamma}_h$ in $S'$, and setting $\ell_{c'}(g)=n\cdot\ell_{c'}(\gamma_h)$.

As in the case of free groups, we proceed by induction on the sum of the combinatorial lengths of two free homotopy classes in $\pi_1(S)$.  There are two different base cases to consider: the case where both $g_1$ and $g_2$ are nontrivial, and the case where only $g_1$ is nontrivial.  Let $\gamma_1$ and $\gamma_2$ be closed, oriented geodesics on $S$ underlying two (possibly trivial) free homotopy classes $g_1$ and $g_2$ in $\pi_1(S)$, and let $p>2$ be a fixed prime.  Suppose \[\ell_c(g_1)+\ell_c(g_2)\leq 1.\]  In this case, we have that only $g_1$ is nontrivial.  We see that $\gamma_1$ intersects exactly one curve in $\mathcal{X}$ exactly once and therefore represents a primitive homology class of $S$ which survives in $H_1(S,\bZ/p\bZ)$.  For the second base case, we have $g_1$ and $g_2$ are nontrivial and therefore \[\ell_c(g_1)+\ell_c(g_2)=2.\]  Since both $g_1$ and $g_2$ are nontrivial, we have $\ell_c(g_1)=\ell_c(g_2)=1$ and hence $\gamma_1$ and $\gamma_2$ each intersect some curve in $\mathcal{X}$ exactly once.  We thus obtain that $g_1$ and $g_2$ are either equal and therefore conjugate, or they are distinct in $H_1(S,\bZ/p\bZ)$ and therefore not conjugate in $\pi_1(S)$ (here, we are using the fact that $p\neq 2$).

Now suppose that $\ell_c(g_1)+\ell_c(g_2)=N>2$.  Let us first consider the case that $\gamma_1$ and $\gamma_2$ fill an annulus $A$ of $S$.  If $A$ is nonseparating then we look at the images of $g_1$ and $g_2$ in $H_1(S,\bZ/p^N\bZ)$ and note that they will be equal if and only if $g_1=g_2$ as free homotopy classes.  If $A$ is separating, then we consider the cover $S_p\to S$ classified by the quotient $H_1(S,\bZ/p\bZ)$.  Clearly $A$ lifts to $S_p$, and each lift of $A$ to $S_p$ is nonseparating.  Choose lifts of $g_1$ and $g_2$ whose underlying geodesics lie within a fixed lift of $A$ to $S_p$.  Again, we have that the the images of these lifts in $H_1(S_p,\bZ/p^N\bZ)$ will be equal if and only if $g_1=g_2$ as free homotopy classes.

Now assume that $\gamma_1$ and $\gamma_2$ fill a subsurface $S_0\subset S$ which is not an annulus.  In particular, at least two distinct elements of $\mathcal{X}$ intersect $S_0$ essentially.  As in the free group case, we may assume that the span of the homology classes of $\gamma_1$ and $\gamma_2$ in $H_1(S,\bZ)$ has rank at most one, by replacing $g_1$ by an appropriate conjugate in $\pi_1(S)$.    Finally, we may assume that under the inclusion $S_0\subset S$, the image of $H_1(S_0,\bZ)\to H_1(S,\bZ)$ has rank at least two.

To see this last claim, if $S_0$ has positive genus then there is nothing to do.  Otherwise, we may assume that $S_0$ has genus zero and at least three boundary components.  We describe the case where $S_0$ has exactly three boundary components, with the general case being a straightforward generalization.  If $D$ is a boundary component of $S_0$ which is separating in $S$, then it is easy to see that there is a degree $p$ cover of $S$ to which $S_0$ lifts and wherein every lift of $D$ becomes nonseparating.  Thus, we can replace $S$ by an abelian $p$--cover of $S$ to which $S_0$ lifts and in which each boundary component of each lift of $S_0$ is nonseparating.  In this case, we have that the rank of the map $H_1(S_0,\bZ)\to H_1(S,\bZ)$ is exactly two.  Indeed, suppose the contrary.  We have that each boundary component of $S_0$ represents a primitive homology class in $H_1(S,\bZ)$.  Choosing a pair of boundary components $b_1,b_2\subset S_0$, we have that $b_1$ and $b_2$ together separate $S$.  But then the third boundary component $b_3\subset S_0$ must separate $S$ as well, which is a contradiction since the homology class of $b_3$ in $S$ is nonzero.  This establishes the claim.

We choose a homomorphism $\phi:\pi_1(S)\to\bZ/p\bZ$ such that the homology classes of $\gamma_1$ and $\gamma_2$ lie in the kernel of $\phi$, so that the free homotopy classes $g_1$ and $g_2$ also lie in the kernel of $\phi$, and we choose a component of the preimage of $S_0$ in the corresponding cover.  Since $S_0$ is not an annulus and since $\gamma_1$ and $\gamma_2$ span a one--dimensional subspace of $H_1(S,\bZ)$, we can choose $\phi$ so that the homology class of some homologically nontrivial simple closed curve $\alpha\subset S_0$ does not lie in the kernel of $\phi$.  Let $S_{\phi}$ be the cover of $S$ determined by the homomorphism $\phi$.  A fundamental domain $F_{\phi}$ for $S_{\phi}$ in $\bH^2$ can be obtained by concatenating $p$ copies of the fundamental domain $F$ for $S$ together, and the $\pi_1(S_\phi)$--translates of $\partial F_{\phi}$ are a proper subset of $\mathcal{W}$, which descends to a $1$--skeleton $\mathcal{X}_{\phi}$ on $S_{\phi}$.

Choose any lifts $\yt{\gamma_1}$ and $\yt{\gamma_2}$ of $\gamma_1$ and $\gamma_2$.  Note that if the total combinatorial length \[\ell_{c_{\phi}}(\yt{\gamma_1})+\ell_{c_{\phi}}(\yt{\gamma_2})\] on $S_{\phi}$ is not strictly smaller than \[\ell_c(\gamma_1)+\ell_c(\gamma_2)\] on $S$, then the subsurface $S_0$ filled by $\gamma_1$ and $\gamma_2$ lifts to $S_{\phi}$ as well.  This can be seen by considering a regular neighborhood of the segment of the lift of $\gamma_i$ to $\bH^2$ used to compute combinatorial length, for each $i$; each time $\gamma_i$ crosses to a new tile of $F$, it also crosses to a new tile of $F_\phi$.  However, we produced a curve $\al\subset S_0$ above which does not lift, which is a contradiction.  The induction follows.
\end{proof}

\bibliography{biblio}
\thispagestyle{empty}
\end{document}